\documentclass[number, 12pt]{elsarticle}

\usepackage{amsmath}
\usepackage{amsthm}
\usepackage{amssymb}
\usepackage{amsfonts}
\usepackage{dsfont}

\newtheorem{theorem}{Theorem}[section]
\newtheorem{corollary}[theorem]{Corollary}
\newtheorem{proposition}[theorem]{Proposition}
\newtheorem{lemma}[theorem]{Lemma}

\newtheorem{remark}[theorem]{Remark}

\numberwithin{equation}{section}

\def\C{{\operatorname{Cov}}}
\def\E{{\mathds E}}
\def\I{{\mathds 1}}
\def\N{{\mathds N}}
\def\P{{\mathds P}}
\def\R{{\mathds R}}
\def\Sd{{\mathds S}}
\def\V{{\operatorname{Var}}}

\def\bN{{\mathbf N}}

\def\cF{{\mathcal F}}

\def\cH{{\mathcal H}}

\def\cN{{\mathcal N}}

\def\d{\delta}
\def\e{\varepsilon}

\def\k{{\kappa}}

\def\s{\sigma}

\def\dint{{\rm d}}
\def\exp#1{\,{\rm exp} \left(#1\right)}

\unitlength1cm

\begin{document}

\title{\bfseries Limit theory for the Gilbert graph}

\author[uos]{Matthias Reitzner}\ead{matthias.reitzner@uos.de}
\author[unibe]{Matthias Schulte}\ead{matthias.schulte@stat.unibe.ch}
\author[rub]{Christoph Th\"ale}\ead{christoph.thaele@rub.de}

\address[uos]{Institute of Mathematics, Osnabr\"uck University, Germany.}
\address[unibe]{Institute of Mathematical Statistics and Actuarial Science, \\ University of Bern, Switzerland.}
\address[rub]{Faculty of Mathematics, Ruhr University Bochum, Germany.}

\begin{abstract}
For a given homogeneous Poisson point process in $\R^d$ two points are connected by an edge if their distance is bounded by a prescribed distance parameter. The behaviour of the resulting random graph, the Gilbert graph or random geometric graph, is investigated as the intensity of the Poisson point process is increased and the distance parameter goes to zero. The asymptotic expectation and covariance structure of a class of length-power functionals are computed. Distributional limit theorems are derived that have a Gaussian, a stable or a compound Poisson limiting distribution. Finally, concentration inequalities are provided using a concentration inequality for the convex distance.
\end{abstract}

\begin{keyword}
Central limit theorem \sep concentration inequality \sep compound Poisson limit theorem \sep covariogram \sep Gilbert graph \sep Malliavin-Stein method \sep Poisson point process \sep random geometric graph \sep stable limit theorem \sep Talagrand's convex distance
\end{keyword}

\maketitle

{\bf MSC}. Primary 60D05; Secondary 60F05, 60F10, 60G55.

\section{Introduction}

Let $\eta_t$ be a homogeneous Poisson point process of intensity $t >0$ in a compact convex observation window $W\subset\R^d$ with volume $V(W)>0$ and let $(\d_t:t>0)$ be a sequence of positive real numbers such that $\d_t\to 0$, as $t\to\infty$.  A random graph $G(\eta_t, \delta_t)$ is defined by taking the points of $\eta_t$ as its vertices and by connecting two distinct points $x,y\in\eta_t$ by an edge if and only if
$$0<\|x-y\| \leq \delta_t \,, $$
where $\|x-y\|$ stands for the Euclidean distance between $x$ and $y$.
The resulting graph $G(\eta_t,\delta_t)$ is called Gilbert graph, random geometric graph or distance graph, and in the special cases $d=1$ and $d=2$ also interval or disc graph, respectively. It has been introduced (in the planar case) by Gilbert \cite{Gilbert} in 1961. As opposed to the Erd\H{o}s-R\'enyi random graph, which is a purely combinatorial object, the Gilbert graph is a random \textit{geometric} graph because in its construction the relative position of the points in space plays an essential r\^ole.

The aim of the present paper is to investigate functionals related to the edge lengths of the Gilbert graph. The length-power functionals $L^{(\tau)}_t$ of interest are defined by
\begin{equation}\label{def:Lalphat}
L^{(\tau)}_t:=\frac{1}{2}\sum_{(x,y)\in \eta^2_{t,\neq}}\I(\| x-y\| \leq \d_t) \, \| x - y \|^\tau\,,
\end{equation}
where $\tau\in\R$ and $\eta^2_{t,\neq}$ stands for the set of all pairs of distinct points of $\eta_t$. The cases $\tau=0$ and $\tau=1$ are of particular importance. Namely, $L^{(0)}_t$ is the number of edges of $G(\eta_t,\delta_t)$ and $L_t^{(1)}$ is its total edge length. In our analysis, we focus on the asymptotic behaviour of $L_t^{(\tau)}$, as $t\to\infty$ and $\delta_t\to 0$. For this situation we first compute the asymptotic expectation of $L^{(\tau)}_t$ and thereby establish a connection to the covariogram of the underlying convex set $W$. We also analyze the asymptotic covariances of $L^{(\tau_1)}_t$ and $L^{(\tau_2)}_t$ for $\tau_1,\tau_2>-d/2$. In a next step, we develop an understanding for the asymptotic behaviour of the length-powers of the individual edges. In this context, we will show that the collection of all edge length-powers converges, after a suitable re-scaling, to a Poisson point process on the real line. We then develop a comprehensive distributional limit theory for the functionals $L_t^{(\tau)}$ for all powers $\tau$. Depending on the choice of $\tau$ and the distance parameters $(\d_t:t>0)$, we obtain central limit theorems as well as non-central limit theorems in which a stable or a compound Poisson limiting random variable shows up. These results provide a complete picture of the asymptotic distributional behaviour of the length-power functionals $L_t^{(\tau)}$ and substantially add to the existing literature. Moreover, we shall also provide multivariate versions of the mentioned central and non-central limit theorems. Our main tool to prove the central limit theorems is the recently developed Malliavin-Stein method for Poisson functionals (see \cite{PSTU10,PeccatiZheng10,SchulteKolmogorov}), while the proofs of the point process convergence result and the non-central limit theorems rest upon recent findings in \cite{DST,STScaling}. Our investigations of the behaviour of $L_t^{(\tau)}$ are completed by concentration inequalities, which are based on Talagrand's convex distance and its relative for Poisson point processes
introduced in \cite{Reitzner13}.

For the asymptotic behaviour of the Gilbert graph the interplay between the intensity $t$ and the distance parameter $\delta_t$ plays a crucial r\^ole. Clearly, the number of vertices of $G(\eta_t,\delta_t)$ is just the cardinality of $\eta_t$, which is a Poisson random variable with expectation $t\,V(W)$ by the definition of a Poisson point process. In addition, the expected number of edges satisfies the approximation
$$
\E L^{(0)}_t \approx \frac{\kappa_d}2\,  t^2 \d_t^d \, V (W) \qquad{\rm as}\qquad t\to \infty\,,
$$
where $\kappa_d$ stands for the volume of the $d$-dimensional unit ball and $\approx$ means that the quotient of the left and the right hand side tends to $1$, as $t\to\infty$, see Section~\ref{sec:expectationAndCovariance} for details. Heuristically, this means that the degree of a typical vertex, i.e., the number of edges emanating from this vertex, is approximately of order $\kappa_d t \d_t^d$ (this can be made precise using the concept of Palm distributions). This heuristic observation about the degree of a typical vertex naturally leads to three different asymptotic regimes as introduced in Penrose's book~\cite{Penrose03}. These are
\begin{itemize}
\item the {\it sparse regime}, where we assume that
$\lim\limits_{t\to\infty}t \, \d_t^d=0$, implying that the degree of a typical vertex tends to zero,
\item the {\it thermodynamic regime}, where we assume that
$\lim\limits_{t\to\infty}t \, \d_t^d=c\in (0,\infty)$, implying that the degree of a typical vertex is asymptotically constant,
\item the {\it dense regime}, where we have
$ \lim\limits_{t\to\infty}t \, \d_t^d=\infty $,
which means that the degree of a typical vertex of the Gilbert graph tends to infinity.
\end{itemize}

\medskip
There is a vast literature on the Gilbert graph. First to note is Penrose's research monograph \cite{Penrose03}, which also summarizes the developments until 2003. More recent findings which are relevant in our context are -- among others -- due to Bourguin and Peccati \cite{BourguinPeccati2012}, Decreusefond, Schulte and Th\"ale \cite{DST}, Lachi\'eze-Rey and Peccati \cite{LachiezeReyPeccatiI,LachiezeReyPeccatiII} and Reitzner and Schulte \cite{ReitznerSchulte2011} or Schulte and Th\"ale \cite{STScaling}. Important investigations not touched in this paper concern subgraph counting statistics (a far reaching generalization of the concept of $L^{(0)}_t$) and percolation, which are at the core of Penrose's book.

\medskip
This paper is organized as follows. In Section \ref{sec:Preliminaries} we fix some general notation and recall the definition and some important properties of Poisson point processes. Asymptotic expectations as well as asymptotic variances and covariances for the functionals $L_t^{(\tau)}$ are derived in Section~\ref{sec:expectationAndCovariance}, while Section~\ref{sec:PointProcessConvergence} is concerned with the point process of $\tau$-powers of the edge lengths. The distributional limit theory for $L_t^{(\tau)}$ is the content of Section~\ref{sec:DistributionalLimits} and Section~\ref{sec:LDI} contains concentration inequalities for $L_t^{(\tau)}$ in the Poisson and the binomial case.

\section{Preliminaries}\label{sec:Preliminaries}

\paragraph{General notation} In this paper we frequently use the following notation. By $(\Omega,\cF,\P)$ we mean our underlying probability space; expectation, variance and covariance of random variables $X$ and $Y$ with respect to $\P$ are denoted by $\E X$, $\V X$ and $\C(X,Y)$, respectively. We also write $\I(\,\cdot\,)$ for an indicator function.

We let $\lambda$ stand for the Lebesgue measure on $\R^d$, where $d\geq 1$ is a fixed integer. For a compact and convex set $W\subset\R^d$, $V(W):=\lambda(W)$ and $S(W)$ are the volume and the surface area of $W$, respectively. A $d$-dimensional ball with centre $x\in\R^d$ and radius $r>0$ is denoted by $B^d(x,r)$ and for a non-negative integer $j$, $\kappa_j$ stands for the volume of the $j$-dimensional unit ball $B^j(0,1)$. The unit sphere in $\R^d$ is denoted by ${\Sd}^{d-1}$.

We also use the Landau notation. That is, for $g,h:\R\to\R$ we write $g=o(h)$ if $\lim\limits_{t\to\infty}|g(t)|/|h(t)|=0$, $g=O(h)$ if $\limsup\limits_{t\to\infty}|g(t)|/|h(t)|=c\in\R$ and $g=\Theta(h)$ if $g=O(h)$ and $h=O(g)$.

\paragraph{Poisson point processes} Let $\bN(W)$ be the space of finite (simple) counting measures $\eta= \sum_{i=1}^n \e_{x_i}$, where $x_1,\ldots,x_n\in W$, $n\in\N$, are distinct points and where $\e_x$ stands for the unit-mass Dirac measure concentrated at $x\in W$. Alternatively, one can think of $\bN(W)$ as the set of all finite point configurations of distinct points from $W$. This can be achieved by identifying the measure $\eta$ with its support, which forms a closed subset of $W$, cf.\ \cite[Lemma 3.1.4]{SW}. For $\eta \in \bN(W)$ and a Borel set $A \subset \R^d$, $\eta(A)$ is the number of points of $\eta$ falling in $A$ and $\eta\cap A$ stands for the restricted point configuration  $\{ x_1, \dots , x_n\} \cap A$. Due to the geometric flavour of the Gilbert graph, in most cases we will think of $\eta$ as a set or configuration of points in $W$. The space $\bN(W)$ is endowed with the $\sigma$-field $\cN(W)$ generated by the evaluation mappings $T_A: \bN(W)\to\R, \eta\mapsto\eta(A)$ for Borel sets $A\subset W$, see \cite[Chapter 3.1]{SW}.

A random counting measure $\eta_t$, i.e., a random variable defined on the probability space $(\Omega,\cF,\P)$ with values in $(\bN(W),\cN(W))$, is called a (homogeneous) Poisson point process in $W$ with intensity $t>0$ if $\P(\eta_t(A)=0)={\rm exp}(-t\lambda(A))$ for any Borel set $A\subset W$. It should be noted that under these circumstances R\'enyi's theorem (see Section 3.4 in \cite{Kingman}) implies that $\eta_t(A)$ is Poisson distributed with mean $t\lambda(A)$ and that for disjoint Borel sets $A_1,\ldots,A_m\subset W$, $m\in\N$, the random variables $\eta_t(A_1),\ldots,\eta_t(A_m)$ are independent. Alternatively, one can think of $\eta_t$ as a random set of $\eta_t(W)$ random points, which are independently placed within $W$ according to the uniform distribution.

\paragraph{Multivariate Mecke formula} One of the main tools of our analysis is the multivariate Mecke formula for Poisson point processes. In our set-up, it says that
\begin{equation}\label{eq:Mecke}
\begin{split}
& \E\sum_{(x_1,\ldots,x_k)\in\eta^k_{t,\neq}}  f(x_1,\ldots,x_k,\eta_t)\\
& = t^k\int\limits_{W^k} \E f\Big(x_1,\ldots,x_k,\eta_t+\sum_{i=1}^k\e_{x_i}\Big)\,\dint (x_1,\ldots ,x_k)\,,
\end{split}
\end{equation}
where $k\geq 1$ is a fixed integer, $f:W^k\times \bN(W)\to\R$ is a non-negative measurable function and $\eta_{t,\neq}^k$ is the set of all $k$-tuples of distinct points of $\eta_t$, cf. \cite[Corollary 3.2.3]{SW}. If $f$ is only a function on $W^k$ and does not depend on $\eta_t$, which will often be the case in the sequel, the expectation on the right-hand side can be omitted.

\section{Expectation and covariance structure}\label{sec:expectationAndCovariance}

We begin by investigating the expectation of $L_t^{(\tau)}$, recall \eqref{def:Lalphat} for the definition. Let $g_W(y)=V(W\cap(W+y))$, $y\in\R^d$, be the so-called covariogram of $W$. This functional is well-known in convex geometry and has a long history. In particular, we refer to the recent breakthrough by Averkov and Bianchi \cite{MateronConjecture} regarding the famous covariogram problem, the work of Galerne \cite{Gal11}, and the references cited therein.
The following result gives a connection between $\E L_t^{(\tau)}$ and the covariogram of $W$, which seems not to have been noticed so far.

\begin{theorem}\label{thm:Expectation}
If $\tau >-d$, one has that
\begin{equation}\label{eq:ExpansionEL}
\E L^{(\tau)}_t=\frac {t^2}2  \int\limits_{B^d(0, \d_t)} \|y\|^\tau g_W(y)\, \dint y
\end{equation}
and
\begin{equation}\label{eq:BoundsEL}
0 \leq
\frac {d\kappa_d}{2(\tau+d)}  V(W)
- \frac{  \E L^{(\tau)}_t}{t^{2} \d_t^{\tau+d}}
\leq
\frac {\kappa_{d-1}}{2(\tau+d+1)} \d_t S(W)
\,.
\end{equation}
\end{theorem}

\begin{remark}\label{rem:PreciseExpectationBound}\rm
Theorem~\ref{thm:Expectation} especially shows that the number of edges of the Gilbert graph is of order $t^2\d_t^d$, whereas its total edge length is of order $ t^2 \d_t^{d+1}$.
\end{remark}

\begin{proof}[Proof of Theorem~\ref{thm:Expectation}]
We apply the multivariate Mecke formula \eqref{eq:Mecke} with $k=2$ and $f(x,y)=\I(  \| x - y \| \leq {\d_t} ) \| x-y\|^\tau$ to obtain
\begin{eqnarray*}
\E L^{(\tau)}_t
&=&  \frac {t^2}2  \int \limits_{W^2}\I(  \| x-y \| \leq {\d_t} ) \| x-y\|^\tau \, \dint (x , y)\\
&=& \frac {t^2}2  \int \limits_{\R^d} \I(  \| y \| \leq {\d_t} ) \|y\|^\tau \Bigg( \ \int \limits_{\R^d} \I(x \in W,\,x-y \in W) \, \dint x \Bigg) \dint y\\
&=&
\frac{t^2}2 \int \limits_{B^d(0, \d_t)} \|y\|^\tau g_W(y)\, \dint y\,,
\end{eqnarray*}
which gives \eqref{eq:ExpansionEL}.
Transformation into spherical coordinates yields
\begin{equation}\label{eq:intcov}
\E L^{(\tau)}_t =
\frac {t^2}2  \int \limits_{B^d(0, \d_t)} \|y\|^\tau\, g_W(y)\, \dint y=
\frac {t^2}2  \int \limits_0^{\d_t} r^{\tau+d-1} \int \limits_{{\Sd}^{d-1}} g_W(ru)\, \dint u \, \dint r\,,
\end{equation}
where $\dint u$ stands for the infinitesimal element of the spherical Lebesgue measure.
Galerne \cite[Theorem 13 (iii)]{Gal11} showed that for given $u\in{\Sd}^{d-1}$, $g_W(ru)$ is a Lipschitz function in $r$ whose Lipschitz constant coincides with the $(d-1)$-dimensional volume $V_{d-1}(W|u^\bot)$ of the orthogonal projection of $W$ onto the hyperplane $u^\bot$ orthogonal to $u$. In particular, this implies that $V(W) \geq  g_W(ru) \geq  V(W) - V_{d-1} ( W \vert u^\perp)  r $ for all $r>0$. Thus, by the well-known Cauchy's surface area formula from integral geometry \cite[Equation (6.12)]{SW} we obtain
\begin{equation}\label{eq:Galbounded}
 d \kappa_d  V(W) \geq
\int\limits_{{\Sd}^{d-1}} g_W(ru)\,\dint u \geq
d \kappa_d V(W) - \kappa_{d-1} S(W) r\,.
\end{equation}
Now, \eqref{eq:BoundsEL} is immediate from \eqref{eq:intcov}  and \eqref{eq:Galbounded}.
\end{proof}

After having investigated the first-moment behaviour of $L_t^{(\tau)}$, we now investigate the covariance structure of these functionals for different values of $\tau>-d$. For $\tau_i, \tau_j > -d$ define
\begin{equation*}
\sigma^{(1)}_{\tau_i \tau_j}:= \begin{cases}
\frac{d\kappa_d}{2|\tau_i+\tau_j+d|} & : \tau_i+\tau_j\neq -d\\ \frac{d\kappa_d}{2} & : \tau_i+\tau_j=-d \end{cases}
\quad \text{and} \quad
\sigma^{(2)}_{\tau_i\tau_j}:=\frac{d^2\kappa_d^2}{(\tau_i+d)(\tau_j+d)}\,.
\end{equation*}
To the best of our knowledge, the structure of the covariance matrix given by \eqref{eq:AsymptoticCovariance} below seems to be new.

\begin{theorem}\label{thm:AsymptoticCovariance}
For $\tau_1,\tau_2 > -d$ such that $\tau_1+\tau_2>-d$ one has the inequality
\begin{equation}\label{eq:Covariance}
0 \leq \, V(W) -
\frac{\C(L_t^{(\tau_1)},L_t^{(\tau_2)}) }{\s^{(1)}_{\tau_1 \tau_2} \, t^2 \, \delta_t^{\tau_1+\tau_2+d} + \s^{(2)}_{\tau_1 \tau_2} \, t^3 \, \delta_t^{\tau_1+\tau_2+2d}} \leq  \delta_t S(W)\,.
\end{equation}
In particular, for $\tau_1>-d/2$ one has the variance asymptotics
$$
 \V L_t^{(\tau_1)}  =  \left( \s^{(1)}_{\tau_1 \tau_1} t^2 \, \delta_t^{2\tau_1+d} + \s^{(2)}_{\tau_1 \tau_1} t^3 \, \delta_t^{2\tau_1+2d} \right)  V(W) (1 + O(\d_t))\,.
$$
Define $\widetilde{L}_t^{(\tau_i)}=(L_t^{(\tau_i)}-\E L_t^{(\tau_i)})/\max\{t \, \delta_t^{\tau_i+d/2}, t^{3/2} \, \delta_t^{\tau_i+d}\}$ with $\tau_i > -  d/2$ for $i=1,\ldots,m$. Then the random vector $(\widetilde{L}_t^{(\tau_1)},\ldots, \widetilde{L}_t^{(\tau_m)})$ has the asymptotic covariance matrix
\begin{equation}\label{eq:AsymptoticCovariance}
\Sigma:=\begin{cases}
 \Sigma^{(1)}  &: \lim\limits_{t\to\infty}t \, \delta_t^d=0\\
\Sigma^{(1)}+c \, \Sigma^{(2)} &: \lim\limits_{t\to\infty}t \, \delta_t^d=c\in (0,1]\\
\frac{1}{c} \, \Sigma^{(1)} + \Sigma^{(2)} &: \lim\limits_{t\to\infty}t \, \delta_t^d=c\in (1,\infty)\\
\Sigma^{(2)} &: \lim\limits_{t\to\infty}t \, \delta_t^d=\infty\,,\\
\end{cases}
\end{equation}
with the matrices $\Sigma^{(1)}$ and $\Sigma^{(2)}$ defined as
$$\Sigma^{(1)}:=V(W) \left( \s^{(1)}_{\tau_i \tau_j}\right)_{i,j=1}^m
 \quad \text{and} \quad
\Sigma^{(2)}:= V(W) \left( \s^{(2)}_{\tau_i \tau_j}\right)_{i,j=1}^m\,.$$
\end{theorem}

\begin{proof}
By definition, we have that the product $L_t^{(\tau_1)} L_t^{(\tau_2)} $ equals
$$
\frac{1}{4}  \sum_{(x_1,y_1)\in\eta^2_{t,\neq}} \I(\|x_1-y_1\|\leq \delta_t) \, \|x_1-y_1\|^{\tau_1}   \sum_{(x_2,y_2)\in\eta^2_{t,\neq}} \I(\|x_2-y_2\|\leq \delta_t) \, \|x_2-y_2\|^{\tau_2} \,.$$
We have to distinguish three cases. The first case arises if the points of the two pairs $(x_1,y_1)$ and $(x_2,y_2)$ are all distinct. The second case arises if exactly one of the points of the first pair is identical with one of the points in the second pair. Finally, in the third case both pairs are comprised of the same points of $\eta_t$. Taking additionally into account multiple counting and applying the multivariate Mecke formula \eqref{eq:Mecke} to each of the three resulting sums yields that $\E L_t^{(\tau_1)} L_t^{(\tau_2)}$ equals
\begin{equation*}
\begin{split}
& \frac{t^4}{4}  \int\limits_{W^4}\I(\|x_1-x_2\|,\|x_3-x_4\|\leq\delta_t) \, \|x_1-x_2\|^{\tau_1} \|x_3-x_4\|^{\tau_2}\, \dint (x_1, \dots, x_4)\\
& +t^3 \int\limits_{W^3}\I(\|x_1-x_2\|,  \|x_1-x_3\|\leq\delta_t) \, \|x_1-x_2\|^{\tau_1} \|x_1-x_3\|^{\tau_2}\, \dint (x_1, x_2 ,x_3) \\
&+\frac{t^2}{2} \int\limits_{W^2}\I(\|x_1-x_2\|\leq\delta_t) \, \|x_1-x_2\|^{\tau_1+\tau_2} \, \dint (x_1,x_2) \,.
\end{split}
\end{equation*}
The first term is just the product of $\E L_t^{(\tau_1)}$ and $\E L_t^{(\tau_2)} $ as is evident from the proof of Theorem~\ref{thm:Expectation}, and we see that $\C(L_t^{(\tau_1)},L_t^{(\tau_2)}) $ equals
\begin{equation*}
\begin{split}
 & t^3 \int\limits_{W} \int\limits_W \I(\|y-x_1\|\leq\delta_t) \, \|y-x_1\|^{\tau_1} \, \dint x_1 \int\limits_W \I(\|y-x_2\|\leq\delta_t) \, \|y-x_2\|^{\tau_2} \, \dint x_2 \, \dint y \\
&+\frac{t^2}{2} \int\limits_W \int\limits_W\I(\|x-y\|\leq\delta_t) \, \|x-y\|^{\tau_1+\tau_2} \,\dint x \, \dint y\,.
\end{split}
\end{equation*}
Denote by $W_{-\delta_t}=\{w\in W : B^d(w,\delta_t)\subset W\}$ the (possibly empty) inner parallel set of $W$ and let $\gamma>-d$.
For a point $y\in W_{-\delta_t}$, we see by transforming into spherical coordinates that
$$\int\limits_W \I(\|y-z\|\leq\delta_t) \, \|y-z\|^\gamma \, \dint z
= \int\limits_{B^d(0,\delta_t)}  \I(\|x\|\leq\delta_t) \, \|x\|^\gamma \, \dint x
=\frac{d \, \kappa_d}{\gamma+d} \, \delta_t^{\gamma+d}.$$
For points $y\in W \setminus W_{- \delta_t}$, one has the inequality
$$0\leq \int\limits_W \I(\|y-z\|\leq\delta_t) \, \|y-z\|^\gamma \, \dint z
\leq \frac{d \, \kappa_d}{\gamma+d} \, \delta_t^{\gamma+d} .$$
Observe that
\begin{equation}\label{eq:innpar}
 V(W_{-\delta_t})\geq  V(W)-S(W) \, \delta_t \,.
\end{equation}
Now, we obtain
\begin{equation*}
\begin{split}
& \left( \frac{d \, \kappa_d}{2 \, (\tau_1+\tau_2+d)} \,  t^2 \, \delta_t^{\tau_1+\tau_2+d} + \frac{d^2 \, \kappa_d^2}{(\tau_1+d) \,(\tau_2+d)} \, t^3 \, \delta_t^{\tau_1+\tau_2+2d} \right)  V(W_{-\delta_t})\\
& \qquad \leq \C(L_t^{(\tau_1)},L_t^{(\tau_2)}) \leq  \\
&\hskip1.6cm \left( \frac{d \, \kappa_d}{2 \, (\tau_1+\tau_2+d)} \, t^2 \, \delta_t^{\tau_1+\tau_2+d} + \frac{d^2 \, \kappa_d^2}{(\tau_1+d) \, (\tau_2+d)} \, t^3 \, \delta_t^{\tau_1+\tau_2+2d} \right)  V(W)\,,
\end{split}
\end{equation*}
which together with (\ref{eq:innpar}) yields \eqref{eq:Covariance}. The form of the asymptotic covariance matrix in \eqref{eq:AsymptoticCovariance} is a direct consequence.
\end{proof}

Next, we discuss the definiteness property of the asymptotic covariance matrix $\Sigma$, which has been defined in Theorem~\ref{thm:AsymptoticCovariance}. It is remarkable that this property undergoes a phase transition when moving from the sparse and the thermodynamic regime to the dense one, a phenomenon that has not found attention in the existing literature.

\begin{proposition}\label{prop:definiteness}
For distinct $\tau_i> -d/2$ for $i=1, \dots, m$ and $m\geq 2$, the asymptotic covariance matrix $\Sigma$ given in \eqref{eq:AsymptoticCovariance} is positive definite in the sparse and in the thermodynamic regime, while it is singular in the dense regime.
\end{proposition}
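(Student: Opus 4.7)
The plan is to analyze the two building blocks $\Sigma_1$ and $\Sigma_2$ separately and then assemble the result for each regime.

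First, I would observe that $\Sigma_2$ has entries of the form $d^2\kappa_d^2\,V(W)\,v_iv_j$ with $v_i=1/(\alpha_i+d)$, so that $\Sigma_2=d^2\kappa_d^2\,V(W)\,vv^T$ is a rank-one positive semi-definite matrix. In particular $\Sigma_2$ is singular whenever $m\ge 2$, which immediately settles the dense regime where $\Sigma=\Sigma_2$.

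Next I would show that $\Sigma_1$ is strictly positive definite. The key observation is the integral representation
$$\frac{1}{\alpha_i+\alpha_j+d}=\int_0^1 r^{\alpha_i+\alpha_j+d-1}\,\dint r,$$
which is valid because the assumption $\alpha_i>-d/2$ ensures $\alpha_i+\alpha_j+d>0$ for all $i,j$. Substituting and interchanging sum and integral yields, for every $a=(a_1,\ldots,a_m)\in\R^m$,
$$a^T\Sigma_1 a=\frac{d\,\kappa_d\,V(W)}{2}\int_0^1 r^{d-1}\Big(\sum_{i=1}^m a_i\,r^{\alpha_i}\Big)^{\!2}\dint r\ge 0.$$
Since the exponents $\alpha_1,\ldots,\alpha_m$ are pairwise distinct, the monomials $r\mapsto r^{\alpha_i}$ are linearly independent as continuous functions on $(0,1)$, so the integrand vanishes identically only when $a=0$. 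Hence $a^T\Sigma_1 a>0$ for $a\ne 0$, which gives positive definiteness of $\Sigma_1$.

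Combining these two facts yields the claim: in the sparse regime $\Sigma=\Sigma_1$ is positive definite, and in the thermodynamic regime $\Sigma$ equals a strictly positive multiple of $\Sigma_1$ plus a non-negative multiple of the positive semi-definite matrix $\Sigma_2$, which preserves positive definiteness. The main obstacle is really only the positive-definiteness of $\Sigma_1$; once the integral representation is spotted the linear-independence argument is routine, and the Cauchy-type structure of the matrix makes the result unsurprising.
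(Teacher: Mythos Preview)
Your proof is correct and, in fact, somewhat cleaner than the paper's. Both arguments dispose of the dense regime in the same way, by noting that $\Sigma_2$ has rank one. For positive definiteness, however, the paper proceeds probabilistically: it lower-bounds $\lim_{t\to\infty}\V\!\big(\sum_i a_i\widetilde L_t^{(\alpha_i)}\big)$ by restricting the inner integral to a ball of radius $u\delta_t$ with $u\in(0,1)$, arriving at the expression $\sum_{i,j}\frac{a_ia_j}{\alpha_i+\alpha_j+d}\,u^{\alpha_i+\alpha_j+d}$, and then argues that the dominant term as $u\to 0$ is the diagonal one corresponding to the smallest active exponent. Your integral representation $\frac{1}{\alpha_i+\alpha_j+d}=\int_0^1 r^{\alpha_i+\alpha_j+d-1}\,\dint r$ short-circuits this: it turns $a^T\Sigma_1 a$ directly into an integral of a square, after which linear independence of the power functions $r\mapsto r^{\alpha_i}$ finishes the job. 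The two approaches are closely related (the paper's parametrized expression is precisely $\int_0^u r^{d-1}\big(\sum_i a_i r^{\alpha_i}\big)^2\,\dint r$), but the paper extracts positivity via a leading-term argument rather than recognizing the square. Your route is purely linear-algebraic and avoids any detour through the variance formula; the paper's route stays closer to the probabilistic origin of $\Sigma$ but is more laborious.
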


\begin{proof}
The matrix $\Sigma^{(2)}$ is only of rank $1$ so that \eqref{eq:AsymptoticCovariance} implies that the asymptotic covariance matrix is singular in the dense regime for $m\geq 2$. It remains to prove that $\Sigma^{(1)}$ is positive definite. The matrix $\frac{2}{d\kappa_d V(W)}\Sigma^{(1)}$ and all its leading principal minors $\big(\frac{2}{d\kappa_d}\sigma_{\tau_i\tau_j}^{(1)}\big)_{i,j=1}^k$, $k\in\{1,\hdots,m\}$, are Cauchy matrices having determinant
$$ \prod_{1\leq i<j\leq k} \left( \frac {\tau_i-\tau_j}{\tau_i+\tau_j +d} \right)^2 \ >0\,, $$
which proves by Sylvester's criterion the positive definiteness.
\end{proof}

As it can be seen from the proof of Theorem \ref{thm:AsymptoticCovariance}, $\C(L_t^{(\tau_1)},L_t^{(\tau_2)})$ is not well-defined if $\tau_1\leq -d$, $\tau_2\leq -d$ or $\tau_1+\tau_2\leq -d$. To overcome this difficulty we consider the family of truncated length-power functionals
\begin{equation}\label{eqn:DefinitionUta}
U_{t,a}^{(\tau)} :=  \frac{1 }{2}\sum_{(x,y)\in\eta_{t,\neq}^2}\I(t^{-2/d}a\leq\|x-y\|\leq\d_t)\,\|x-y\|^\tau
\end{equation}
for $a\geq0$, $\tau\in\R$ and $t>0$. For $a>0$, $U_{t,a}^{(\tau)}$ has finite moments of all orders. To describe the covariance structure of these truncated functionals let us define
\begin{equation}\label{eqn:DefinitionSigmaTau}
\varrho_\tau(t):=\begin{cases} \max\{t\delta_t^{\tau+d/2},t^{3/2}\delta_t^{\tau+d}\} &:  \tau>-d/2\\
\max\{ t\sqrt{\ln(t^{2/d}\delta_t)},t^{3/2}\delta_t^{d/2}\} &: \tau=-d/2\\
t^{3/2}\delta_t^{\tau+d} &: \tau<-d/2\,. \end{cases}
\end{equation}
After rescaling by $\varrho_\tau(t)$ the truncated functionals $U^{(\tau)}_{t,a}$ have the following asymptotic covariances.

\begin{theorem}\label{thm:CovariancesUta}
Let $a>0$.
\begin{itemize}
\item[(a)] Assume that $t^2\delta_t^d\to\infty$, as $t\to\infty$. For $\tau_i, \tau_j>-d/2$,
$$
\lim_{t\to\infty} \frac{\C\Big(\frac{U_{t,a}^{(\tau_i)}}{\varrho_{\tau_i}(t)},\frac{U_{t,a}^{(\tau_j)}}{\varrho_{\tau_j}(t)}\Big)}{V(W)}= \begin{cases}
 \sigma^{(1)}_{\tau_i\tau_j}  &: \lim\limits_{t\to\infty}t \, \delta_t^d=0\\
\sigma^{(1)}_{\tau_i\tau_j}+c \, \sigma^{(2)}_{\tau_i\tau_j} &: \lim\limits_{t\to\infty}t \, \delta_t^d=c\in (0,1]\\
\frac{1}{c} \, \sigma^{(1)}_{\tau_i\tau_j} + \sigma^{(2)}_{\tau_i\tau_j} &: \lim\limits_{t\to\infty}t \, \delta_t^d=c\in (1,\infty)\\
\sigma^{(2)}_{\tau_i\tau_j} &: \lim\limits_{t\to\infty}t \, \delta_t^d=\infty\,,
\end{cases}
$$
for $\tau_i> -d/2$ and $\tau_j=-d/2$,
$$
\lim_{t\to\infty} \frac{\C\Big(\frac{U_{t,a}^{(\tau_i)}}{\varrho_{\tau_i}(t)},\frac{U_{t,a}^{(\tau_j)}}{\varrho_{\tau_j}(t)}\Big)}{V(W)}= \begin{cases}
 0  &: \lim\limits_{t\to\infty}t\delta_t^d/\ln(t^{2/d}\delta_t)=0\\
\sqrt{c} \, \sigma^{(2)}_{\tau_i\tau_j} &: \lim\limits_{t\to\infty}t\delta_t^d/\ln(t^{2/d}\delta_t)=c\in (0,1]\\
\sigma^{(2)}_{\tau_i\tau_j} &: \lim\limits_{t\to\infty}t\delta_t^d/\ln(t^{2/d}\delta_t)\in (1,\infty]\,, 
\end{cases}
$$
and for $\tau_i=-d/2$,
$$
\lim_{t\to\infty} \frac{\V\Big(\frac{U_{t,a}^{(\tau_i)}}{\varrho_{\tau_i}(t)}\Big)}{V(W)}= \begin{cases}
 \sigma^{(1)}_{\tau_i\tau_i}  &: \lim\limits_{t\to\infty}t\delta_t^d/\ln(t^{2/d}\delta_t)=0\\
\sigma^{(1)}_{\tau_i\tau_i}+c \, \sigma^{(2)}_{\tau_i\tau_i} &: \lim\limits_{t\to\infty}t\delta_t^d/\ln(t^{2/d}\delta_t)=c\in (0,1]\\
\frac{1}{c}\sigma^{(1)}_{\tau_i\tau_i}+\sigma^{(2)}_{\tau_i\tau_i} &: \lim\limits_{t\to\infty}t\delta_t^d/\ln(t^{2/d}\delta_t)=c\in (1,\infty)\\
\sigma^{(2)}_{\tau_i\tau_i} &: \lim\limits_{t\to\infty}t\delta_t^d/\ln(t^{2/d}\delta_t)=\infty\,.
\end{cases}
$$
\item [(b)] Assume that $\tau_i\geq \tau_j\in(-d,-d/2)$ and that $t^{3+4\tau_j/d}\delta_t^{2\tau_j+2d}\to\infty$, as $t\to\infty$. Then,
$$
\lim_{t\to\infty} \C\Big(\frac{U_{t,a}^{(\tau_i)}}{\varrho_{\tau_i}(t)},\frac{U_{t,a}^{(\tau_j)}}{\varrho_{\tau_j}(t)}\Big) = \sigma^{(2)}_{\tau_i,\tau_j}V(W)\,.
$$
\end{itemize}
\end{theorem}
\begin{proof}
By the same arguments as in the proof of Theorem~\ref{thm:AsymptoticCovariance} we obtain that
$$
(V(W)-\delta_t S(W)) (J_1+J_2) \leq \C(U_{t,a}^{(\tau_i)},U_{t,a}^{(\tau_j)}) \leq V(W) (J_1+J_2)
$$
with
$$
J_1= \I(t^{-2/d}a\leq \delta_t) \, \frac{d\kappa_d}{2} t^2 \int_{t^{-2/d}a}^{\delta_t} r^{\tau_i+\tau_j+d-1} \, \dint r
$$
and
$$
J_2= \I(t^{-2/d}a\leq \delta_t) \, d^2\kappa_d^2 t^3 \int_{t^{-2/d}a}^{\delta_t} r^{\tau_i+d-1} \, \dint r \ \int_{t^{-2/d}a}^{\delta_t} r^{\tau_j+d-1} \, \dint r\,.
$$
Evaluation of these integrals shows that, for $t^{-2/d}a\leq \delta_t$,
$$
J_1=\begin{cases} \sigma^{(1)}_{\tau_i\tau_j} t^2 (\delta_t^{\tau_i+\tau_j+d} -(t^{-2/d}a)^{\tau_i+\tau_j+d}) & : \tau_i+\tau_j>-d\\
\sigma^{(1)}_{\tau_i\tau_j} t^2 (\ln \delta_t -\ln(t^{-2/d}a)) & : \tau_i+\tau_j=-d\\
 \sigma^{(1)}_{\tau_i\tau_j} t^2 ((t^{-2/d}a)^{\tau_i+\tau_j+d} - \delta_t^{\tau_i+\tau_j+d})  &: \tau_i+\tau_j\in(-2d,-d)\end{cases}
$$
and
$$
J_2= \sigma^{(2)}_{\tau_i\tau_j} t^{3} (\delta_t^{\tau_i+d}-(t^{-2/d}a)^{\tau_i+d}) (\delta_t^{\tau_j+d}-(t^{-2/d}a)^{\tau_j+d})\,.
$$
The assumptions that $\delta_t\to 0$ and $t^2\delta_t^d\to\infty$, as $t\to\infty$, complete the proof of (a). Note that $t^{3+4\tau_j/d}\delta_t^{2\tau_j+2d}\to\infty$, as $t\to\infty$, implies that $J_1/J_2\to 0$ as $t\to\infty$. This yields the statement of (b).
\end{proof}

\begin{remark}\rm
We can use the same arguments as in the proof of Proposition \ref{prop:definiteness} to investigate the positive definiteness of the covariance matrices given in Theorem \ref{thm:CovariancesUta}. For distinct $\tau_1,\hdots,\tau_m\geq -d/2$ the asymptotic covariance matrix obtained in Theorem \ref{thm:CovariancesUta} (a) is positive definite in the sparse and in the thermodynamic regime. In the dense regime it is singular except of the special case that $m=2$, $\tau_1=-d/2$ or $\tau_2=-d/2$ and $\lim\limits_{t\to\infty}t\delta_t^d/\ln(t^{2/d}\delta_t)=c\in\R$. The covariance matrix in Theorem \ref{thm:CovariancesUta} (b) is singular for all $m\geq 2$.
\end{remark}

\section{Point process convergence}\label{sec:PointProcessConvergence}

Recall the definition \eqref{def:Lalphat} of the length-power functional $L_t^{(\tau)}$,
$$
L_t^{(\tau)} = \sum_{(x,y)\in\eta_{t,\neq}^2} \I(\| x-y\| \leq \d_t) \, \| x - y \|^\tau\,.
$$
In this section we investigate the summands, that is, the building blocks of this random sum. The understanding of their joint asymptotic behaviour is the foundation for the non-central limit theorems developed in Section~\ref{sec:DistributionalLimits} below. To this end, define the point process
$$
\xi_t^{(\tau)}:=\frac{1}{2}\sum_{(x,y)\in\eta_{t,\neq}^2} \I(\| x-y\| \leq \d_t) \, \e_{\|x-y\|^\tau}\,
$$
for $\tau\in\R$ with $\tau\neq 0$, where, recall, $\e_x$ stands for the unit-mass Dirac measure at $x$. We exclude the degenerate case $\tau=0$, where all points are concentrated at $1$, in order to obtain a simple point process, that is, a point process without multiple points. Our attention is focussed on the asymptotic distributional behaviour of the rescaled point process $t^{2\tau/d}\xi_t^{(\tau)}$ on $\R_+$. Part (a) of the following theorem deals with the regime in which the expected number of edges tends to infinity, while part (b) focuses on the case in which the number of edges of the Gilbert graph stays asymptotically constant in the mean in that there is a constant $0<c<\infty$ such that $\lim_{t\to\infty} t^2 \, \delta_t^d=c$. The first case is taken from \cite[Theorem 2.4]{STScaling}, while the second case can be obtained by combining Corollary 3.3 and Lemma 7.14 in \cite{DST}. We also mention that the paper \cite{DST} provides rates of convergence measured in a suitable point process distance, which we do not provide here for the sake of brevity.

\begin{theorem}\label{thm:PPCedgesinfinity}
\begin{itemize}
\item[(a)] If $\tau\in\R$ with $\tau\neq 0$ and $t^2 \, \delta_t^d\to\infty$, as $t\to\infty$, the point process $t^{2\tau/d}\xi_t^{(\tau)}$ converges in distribution to a Poisson point process on $\R_+$ with intensity measure
$$\nu(B)=\frac{d\k_d}{2|\tau|} V(W)\int\limits_B u^{(d/\tau)-1}\,\dint u\,, \qquad B\subset\R_+ \text{ Borel}\,.$$

\item[(b)] If $\tau\in\R$ with $\tau\neq0$ and $t^2 \, \delta_t^d\to c\in (0,\infty)$, as $t\to\infty$, the point process $t^{2\tau/d}\xi_t^{(\tau)}$ converges in distribution to a Poisson point process on $\R_+$ with intensity measure
$$\nu(B)= \frac{d\k_d}{2 |\tau|} V(W)\int\limits_{B} \I(u^{d/\tau}\in[0,c]) \, u^{(d/\tau)-1}\,\dint u\,, \qquad B\subset\R_+ \text{ Borel}\,.$$
\end{itemize}
\end{theorem}

\begin{remark}\rm
We notice that if $t^2\d_t^d\to 0$ as $t\to\infty$, the probability that there are no edges tends to one. This is an immediate consequence of Theorem~\ref{thm:Expectation}, and together with Markov's inequality we see that the rate for this convergence is at least $t^2\delta_t^d$. The limiting point process in this case might be interpreted as the empty point process or configuration.
\end{remark}

Let us present a consequence of Theorem~\ref{thm:PPCedgesinfinity}, which is very useful for some of the proofs in Section \ref{sec:DistributionalLimits}. In what follows, we shall use the notation $\overset{d}{\longrightarrow}$ to indicate convergence in distribution.

\begin{theorem}\label{thm:PPPConv}
Let $m\in\N$, $\tau_1,\ldots,\tau_m\in\R$, $a\in\R_+$ with $a\leq \lim\limits_{t\to\infty} t^{2/d}\delta_t$ and define, for $i\in\{1,\ldots,m\}$,
\begin{equation}\label{eqn:DefinitionLta}
L_{t,a}^{(\tau_i)} := \frac 12 \sum_{(x,y)\in\eta_{t,\neq}^2}\I(\|x-y\|\leq\min\{t^{-2/d}a,\d_t\})\,\|x-y\|^{\tau_i}\,.
\end{equation}
Further, let $\zeta$ be a unit-intensity Poisson point process on $\R_+$ and define the random variables
$$
Z_i := \Big(\frac{2}{\kappa_dV(W)}\Big)^{\tau_i/d}\sum_{x\in\zeta\cap[0,\frac{\k_dV(W)}{ 2}a^d]}x^{\tau_i/d}\,,\qquad i\in\{1,\ldots,m\}\,.
$$
Then, $(t^{2\tau_1/d}L_{t,a}^{(\tau_1)},\ldots,t^{2\tau_m/d}L_{t,a}^{(\tau_m)})\overset{d}{\longrightarrow}(Z_1,\ldots,Z_m)$, as $t\to\infty$.
\end{theorem}
\begin{proof}
Let $\xi^{(d)}$ be a Poisson point process on $\R_+$ whose intensity measure coincides with $\frac{\k_d}{ 2}V(W)$ times the Lebesgue measure. From Theorem~\ref{thm:PPCedgesinfinity} in the special case $\tau=d$ there we infer in connection with Corollary 5.5 and Theorem 16.16 in \cite{Kallenberg} and the fact that the class of non-negative continuous functions on $[0,a^d]$ is closed with respect to non-negative linear combinations that
\begin{equation}\label{eq:ConvDistrKallenberg}\
\begin{split}
& \Big(\sum_{z\in t^2\xi_t^{(d)}\cap[0,a^d]}f_1(z),\ldots,\sum_{z\in t^2\xi_t^{(d)}\cap[0,a^d]}f_m(z)\Big)\\
& \overset{d}{\longrightarrow}\Big(\sum_{z\in\xi^{(d)}\cap[0,a^d]}f_1(z),\ldots,\sum_{z\in\xi^{(d)}\cap[0,a^d]}f_m(z)\Big)\,,
\end{split}
\end{equation}
as $t\to\infty$, for all non-negative continuous functions $f_1,\ldots,f_m$ on $[0,a^d]$. Let us define for $n\in\N$ and $i\in\{1,\ldots,m\}$,
$$
g_i(z):=z^{\tau_i/d}\qquad\text{and}\qquad g_{i,n}(z):=\begin{cases} n^{-\tau_i/d} &: z\in[0,1/n]\\ z^{\tau_i/d} &: z\in(1/n,a^d]\,, \end{cases}
$$
and note that  the functions $g_{1,n},\ldots,g_{m,n}:[0,a^d]\to\R$ are non-negative and continuous. We further notice that $t^{2\tau_i/d}L_{t,a}^{(\tau_i)} = \sum_{z\in t^2\xi_t^{(d)}\cap [0,a^d]}g_i(z)$ for all $i\in\{1,\ldots,m\}$. We also observe that
\begin{align*}
&\P\Big(\sum_{z\in t^2\xi_t^{(d)}\cap [0,a^d]}g_i(z)\neq \sum_{z\in t^2\xi_t^{(d)}\cap [0,a^d]}g_{i,n}(z)\quad\text{for some }i\in\{1,\ldots,m\}\Big)\\
& \leq \frac{1}{2} \E \sum_{(x,y)\in\eta_{t,\neq}^2} \I(\|x-y\|^d\leq t^{-2}/n) \leq \frac{\kappa_d V(W)}{2n}\,,
\end{align*}
which tends to zero, as $n\to\infty$, independently of $t$, and that
$$
\lim_{n\to\infty}\P\Big(\sum_{z\in\xi^{(d)}\cap [0,a^d]}g_i(z)\neq \sum_{z\in\xi^{(d)}\cap [0,a^d]}g_{i,n}(z)\quad\text{for some }i\in\{1,\ldots,m\}\Big)=0\,.
$$
This implies that for a bounded continuous function $h:\R^m\to\R$ and $\varepsilon>0$ we can find $n\in\N$ such that
\begin{align*}
&\Big|\E h((t^{2\tau_i/d}L_{t,a}^{(\tau_i)})_{i=1}^m)-\E h\Big(\Big(\sum_{x\in t^2\xi_t^{(d)}\cap [0,a^d]}g_{i,n}(z)\Big)_{i=1}^m \Big)\Big|\leq\frac{\varepsilon}{3}\,, \\
& \Big|\E h\Big(\Big(\sum_{z\in \xi^{(d)}\cap[0,a^d]}g_{i,n}(z)\Big)_{i=1}^m\Big)-\E h\Big(\Big(\sum_{z\in \xi^{(d)}\cap [0,a^d]}g_i(z)\Big)_{i=1}^m\Big)\Big|\leq\frac{\varepsilon}{3}
\end{align*}
for all $t>0$. Moreover, we also have that
$$
\Big|\E h\Big(\Big(\sum_{z\in t^2\xi_t^{(d)}\cap [0,a^d]}g_{i,n}(z)\Big)_{i=1}^m\Big)-\E h\Big(\Big(\sum_{z\in \xi^{(d)}\cap [0,a^d]}g_{i,n}(z)\Big)_{i=1}^m\Big)\Big|\leq\frac{\varepsilon}{3}
$$
for all sufficiently large $t$, by \eqref{eq:ConvDistrKallenberg} and the properties of $g_{1,n},\ldots,g_{m,n}$. Combining these estimates with the triangle inequality, we conclude that
$$
\Big|\E h((t^{2\tau_i/d}L_{t,a}^{(\tau_i)})_{i=1}^m)-\E h\Big(\Big(\sum_{z\in \xi^{(d)}\cap [0,a^d]}g_{i}(z)\Big)_{i=1}^m\Big)\Big|\leq\varepsilon
$$
for sufficiently large $t$. This proves that, as $t\to\infty$,
$$
(t^{2\tau_1/d}L_{t,a}^{(\tau_1)},\ldots,t^{2\tau_m/d}L_{t,a}^{(\tau_m)})\overset{d}{\longrightarrow}\Big(\sum_{z\in \xi^{(d)}\cap [0,a^d]}g_{1}(z),\ldots,\sum_{z\in \xi^{(d)}\cap [0,a^d]}g_{m}(z)\Big)\,,
$$
where by a change of variables the limiting random vector has the same distribution as $(Z_1,\ldots,Z_m)$.
\end{proof}

\section{Distributional limit theorems}\label{sec:DistributionalLimits}

\subsection{Central limit theorems}

The present section is devoted to the question for which choices of $\tau$ and $(\d_t: t>0)$ the suitably normalized length-power functionals $L_t^{(\tau)}$ converge in distribution to a standard Gaussian random variable.

\begin{theorem}\label{thm:CLT}
Assume that $t^2\d_t^d\to\infty$, as $t\to\infty$, and let $N\sim\cN(0,1)$ be a standard Gaussian random variable.
\begin{itemize}
\item[(a)] If $\tau>-d/2$, then
$$
\frac{L_t^{(\tau)}-\E L_t^{(\tau)}}{\sqrt{\V L_t^{(\tau)}}}\overset{d}{\longrightarrow} N, \quad \text{as} \quad t\to\infty.
$$
\item [(b)] If $\tau=-d/2$, then
$$
\frac{L_t^{(\tau)}-\E L_t^{(\tau)}}{\sqrt{V(W)}\sqrt{d\kappa_d t^2 \ln(t^{2/d}\delta_t)/2+4\kappa_d^2t^3\delta_t^d}}\overset{d}{\longrightarrow} N, \quad \text{as} \quad t\to\infty.
$$
\item[(c)] If $\tau\in(-d,-d/2)$ and $t^{3+4\tau/d}\d_t^{2(d+\tau)}\to \infty$, as $t\to\infty$, then
$$
\frac{L_t^{(\tau)}-\E L_t^{(\tau)}}{d\kappa_d \sqrt{V(W)} t^{3/2} \delta_t^{\tau+d}/(\tau+d)}\overset{d}{\longrightarrow} N, \quad \text{as} \quad t\to\infty.
$$
\end{itemize}
\end{theorem}

Theorem~\ref{thm:CLT} follows directly from our more general multivariate central limit theorem that we present next. For that purpose we define $\widetilde{L}_t^{(\tau)}=(L_t^{(\tau)}-\E L_t^{(\tau)})/\varrho_\tau(t)$ for $t>0$ and $\tau>-d$, where $\varrho_\tau(t)$ was defined in \eqref{eqn:DefinitionSigmaTau}.

\begin{theorem}\label{thm:CLTMulti}
Let $\tau_1>\tau_2>\ldots>\tau_m$ for $m\in\N$ and assume that
\begin{itemize}
\item [(a)] $\tau_m\geq -d/2$ and $t^2\d_t^d\to\infty$, or
\item [(b)] $\tau_m\in (-d,-d/2)$ and $t^{3+4\tau_m/d}\d_t^{2(d+\tau_m)}\to\infty$, as $t\to\infty$.
\end{itemize}
Let ${\bf N}_\Sigma$ be an $m$-dimensional centred Gaussian random vector whose covariance matrix $\Sigma$ is the asymptotic covariance matrix of $\big(U_{t,a}^{(\tau_i)}/\varrho_{\tau_i}(t)\big)_{i=1,\hdots,m}$ given in Theorem \ref{thm:CovariancesUta}. Then,
$$
\big(\widetilde{L}_t^{(\tau_1)},\ldots,\widetilde{L}_t^{(\tau_m)}\big)\overset{d}{\longrightarrow} {\bf N}_{\Sigma}\,,\qquad\text{as}\qquad t\to\infty\,.
$$
\end{theorem}

Finally, we present a quantitative version of Theorem~\ref{thm:CLT} (a) in the partial regime $\tau> -d/4$. Its proof and that of Theorem~\ref{thm:CLTMulti} make use of the so-called Malliavin-Stein method. We emphasize that the main result in \cite{Sunklodas} can be used to show that the rate of convergence provided in this way is in fact optimal up to the numerical constant $C$ (we also refer to the paper \cite{AzmoodehPeccati} in which for the case $\tau=0$ the optimality of the rate for a different probability metric, the so-called Wasserstein distance, has been verified by other methods).

\begin{theorem}\label{thm:CLTRateUniv}
Let $\tau > -d/4 $ and let $N\sim\cN(0,1)$ be a standard Gaussian random variable. Then there is a constant $C>0$ only depending on $\tau$, $W$ and $(\delta_t:t\geq 1)$ such that
$$\sup_{x\in\R}\left|\P\left(\frac{L_t^{{(\tau)}} - \E L_t^{{(\tau)}}}{\sqrt{\V L_t^{{(\tau)}}}}\leq x\right) - \P\left(N\leq x\right)\right|\leq C \, t^{-\frac{1}{2}} \,\max\{1,(t \, \delta_t^d)^{-\frac{1}{2}}\}$$
for $t\geq 1$.
\end{theorem}

The following lemma allows us to approximate $L_t^{(\tau)}$ by its truncated version $U_{t,a}^{(\tau)}$, which was defined in \eqref{eqn:DefinitionUta}.

\begin{lemma}\label{lem:LtUt}
Let $\tau>-d$ and $a\geq 0$ and suppose that $t^2\delta_t^d\to\infty$, as $t\to\infty$. If $\tau\in(-d,-d/2)$, assume additionally that $t^{3+4\tau/d}\delta_t^{2\tau+2d}\to\infty$, as $t\to\infty$. Then,
$$
\lim_{t\to\infty}\E\Big|  \frac{L_t^{(\tau)}-\E L_t^{(\tau)} }{ \varrho_\tau(t)} - \frac{U_{t,a}^{(\tau)}-\E U_{t,a}^{(\tau)} }{ \varrho_\tau(t)}\Big|=0\,.
$$
\end{lemma}
\begin{proof}
It holds that
\begin{align*}
&\E\Big| \frac{L_t^{(\tau)}-\E L_t^{(\tau)} }{ \varrho_\tau(t)} -  \frac{U_{t,a}^{(\tau)}-\E U_{t,a}^{(\tau)} }{ \varrho_\tau(t)}\Big|\leq  \frac{2 }{ \varrho_\tau(t)}\,\E|L_t^{(\tau)}-U_{t,a}^{(\tau)}|\\
&\leq  \frac{t^2 }{ \varrho_\tau(t)}\int\limits_{W^2}\I(\|x-y\|\leq\min\{t^{-2/d}a,\d_t\})\,\|x-y\|^\tau\,\dint(x,y)\\
& \leq \frac{t^2 }{ \varrho_\tau(t)}\frac{d\kappa_d V(W)}{\tau+d}\,(t^{-2/d}a)^{d+\tau}\,.
\end{align*}
Since $t^{2\tau/d}\varrho_\tau(t)\to\infty$ by \eqref{eqn:DefinitionSigmaTau} and our assumptions on $\tau$ and $(\delta_t:t>0)$, the result follows.
\end{proof}

In the next step, we approximate the random vector
$$
{\bf U}_{t,a}:=\bigg(\frac{U^{(\tau_1)}_{t,a}-\E U^{(\tau_1)}_{t,a}}{\varrho_{\tau_1}(t)},\ldots,\frac{U^{(\tau_m)}_{t,a}-\E U^{(\tau_m)}_{t,a}}{\varrho_{\tau_m}(t)} \bigg)
$$
with $a> 0$ and $\tau_1>\ldots>\tau_m>-d$, $m\in\N$, by a Gaussian random vector. This will be done by using a bound, which was derived by the Malliavin-Stein method. In order to present this result, we need to introduce some more notation.

Let $\mathcal{G}_{\tau_i\tau_j}$ be the set of all connected weighted multigraphs (without loops) $G=(V,E)$ with four edges such that two edges have weight $\tau_i$ and the two remaining edges have weight $\tau_j$. For $e\in E$ we denote by $\tau_e$ the weight of the edge $e$ and by $v_1(e)$ and $v_2(e)$ its two endpoints. For a graph $G\in\mathcal{G}_{\tau_i\tau_j}$ and $a,t>0$ we define
\begin{align*}
M_G(a,t) := t^{|V|} \int_{W^{|V|}} \prod_{e\in E} & \I(t^{-2/d}a\leq \|x_{v_1(e)}-x_{v_2(e)}\|\leq \delta_t)\\ & \times \|x_{v_1(e)}-x_{v_2(e)}\|^{\tau_e}  \, \dint(x_{v})_{v\in V}
\end{align*}
and
$$
M_{\tau_i\tau_j}(a,t) := \max_{G\in\mathcal{G}_{\tau_i\tau_j}} M_G(a,t).
$$
Later we shall use that these terms even exist for $a=0$ if $\tau_i,\tau_j>-d/4$.

In order to compare the distributions of two $m$-dimensional random vectors ${\bf Y},{\bf Z}$ with $\E\|{\bf Y}\|^2<\infty$ and $\E\|{\bf Z}\|^2<\infty$, we will use the so-called $d_3$-distance that is defined as
$$
d_3({\bf Y},{\bf Z}):=\sup_{g\in{\cal H}_m} |\E g({\bf Y})-\E g({\bf Z})|\,.
$$
Here, ${\cal H}_m$ stands for the class of all thrice continuously differentiable functions $g: \R^m\to\R$ such that
$$\max_{1\leq i_1, i_2\leq m}\sup_{x\in\R^m}\left|\frac{\partial^2 g}{\partial x_{i_1} \partial x_{i_2}}(x)\right|\leq 1\,, \quad \max_{1\leq i_1, i_2, i_3\leq m}\sup_{x\in\R^m}\left|\frac{\partial^3 g}{\partial x_{i_1} \partial x_{i_2} \partial x_{i_3}}(x)\right|\leq 1\,.$$ Notice that $\cH_m$ is a  convergence determining class, that is, convergence in $d_3$-distance implies weak convergence of the involved laws.

For the $d_3$-distance between ${\bf U}_{t,a}$ and an $m$-dimensional centred Gaussian random vector ${\bf N}_\Sigma$ with covariance matrix $\Sigma=(\sigma_{ij})_{i,j=1}^m$ we have the bound
\begin{equation}\label{eq:Boundd3}
\begin{split}
d_3({\bf U}_{t,a},{\bf N}_\Sigma) & \leq \frac{1}{2} \sum_{i,j=1}^m \left| \sigma_{ij}-\frac{\C(U_{t,a}^{(\tau_i)},U_{t,a}^{(\tau_j)})}{\varrho_{\tau_i}(t)\varrho_{\tau_j}(t)}\right|\\
& \quad +Cm \left(\sum_{\ell=1}^m \frac{\sqrt{\V U_{t,a}^{(\tau_\ell)}}}{\varrho_{\tau_\ell}(t)}+1\right) \sum_{i,j=1}^m \frac{\sqrt{M_{\tau_i\tau_j}(a,t)}}{\varrho_{\tau_i}(t)\varrho_{\tau_j}(t)}
\end{split}
\end{equation}
with an absolute constant $C>0$, which can be deduced from Theorem 4.2 in \cite{PeccatiZheng10} in a similar way as the bound for the Wasserstein distance in \cite{ReitznerSchulte2011} is obtained from Theorem 3.1 in \cite{PSTU10} (for an exact proof we refer to Theorem 6.3 in the second author's PhD thesis \cite{SchulteDiss}).

For the Kolmogorov distance between the standardization of $U^{(\tau)}_{t,a}$ and a standard Gaussian random variable $N\sim\cN(0,1)$ it follows from Theorem 4.2 in \cite{SchulteKolmogorov} that
\begin{equation}\label{eq:BoundKolmogorov}
\sup_{x\in\R}\left| \P\left(\frac{U_{t,a}^{(\tau)} - \E U_{t,a}^{(\tau)}}{\sqrt{\V U_{t,a}^{(\tau)}}}\leq x \right) -\P\left(N\leq x\right)\right|\leq C \, \frac{\sqrt{M_{\tau\tau}(a,t)}}{\V U_{t,a}^{(\tau)}}
\end{equation}
with an absolute constant $C>0$.

Thus, in order to show Theorem~\ref{thm:CLTMulti} and Theorem~\ref{thm:CLTRateUniv} we need to find upper bounds for $M_{\tau_i\tau_j}(a,t)$ and to control the convergence of the covariance matrix of ${\bf U}_{t,a}$ to $\Sigma$.

\begin{proof}[Proof of Theorem \ref{thm:CLTMulti}]
To bound $M_{\tau_i\tau_j}$, let $\mathcal{G}^T_{\tau_i\tau_j}$ be the set of all $G\in\mathcal{G}_{\tau_i\tau_j}$ that have no circles (we use the convention that two vertices connected by multiple edges do not form a circle). In other words, $\mathcal{G}^T_{\tau_i\tau_j}$ is the subset of trees in $\mathcal{G}_{\tau_i\tau_j}$. Then, it is clear that there exists a constant $C_T>0$ such that
$$
M_{\tau_i\tau_j}(a,t)\leq C_T \max_{G\in \mathcal{G}^T_{\tau_i\tau_j}} M_{G}(a,t)\,.
$$
Indeed, for any point configuration $(x_v)_{v\in V}$ the integrand in the definition of $M_{G}(a,t)$ can be bounded by the integrand of another multigraph which is obtained by shifting weighted edges. For a tree $G\in \mathcal{G}^T_{\tau_i\tau_j}$, we obtain by successively integrating with respect to the variables corresponding to the leaves that
\begin{equation}\label{eqn:BoundMG}
M_{G}(a,t) \leq \frac{V(W)t}{16} \prod_{\substack{\{v_1,v_2\}\subset V\\ (v_1,v_2)\in E}} d\kappa_d t \int_{\min\{t^{-2/d}a,\d_t\}}^{\delta_t} r^{d-1+\sum_{e\in E, e=(v_1,v_2)}\tau_e} \, \dint r\,.
\end{equation}
Note that for any $a>0$ and $\tau,\tau_1,\tau_2>-d$ there are constants $C(\tau)>0$ and $C(a,\tau_1,\tau_2)>0$ such that
\begin{align*}
& \frac{\sqrt{t}}{\varrho_\tau(t)} t \int_{\min\{t^{-2/d}a,\d_t\}}^{\delta_t} r^{d-1+\tau} \, \dint r \leq C(\tau)\,, \allowdisplaybreaks\\
& \frac{t}{\varrho_{\tau_1}(t) \varrho_{\tau_2}(t)} t \int_{\min\{t^{-2/d}a,\d_t\}}^{\delta_t} r^{d-1+\tau_1+\tau_2} \, \dint r \leq C(a,\tau_1,\tau_2)\,,
\end{align*}
and that
\begin{align*}
& \frac{t}{\varrho_{\tau_1}(t)^2 \varrho_{\tau_2}(t)} t \int_{\min\{t^{-2/d}a,\d_t\}}^{\delta_t} r^{d-1+2\tau_1+\tau_2} \, \dint r\\
&\qquad\leq \Bigg(\frac{t}{\varrho_{\tau_1}(t)^4}  t \int_{\min\{t^{-2/d}a,\d_t\}}^{\delta_t} r^{d-1+4\tau_1} \, \dint r \Bigg)^{1/2}\\
&\qquad\qquad\times\Bigg(\frac{t}{\varrho_{\tau_2}(t)^2}  t\int_{\min\{t^{-2/d}a,\d_t\}}^{\delta_t} r^{d-1+2\tau_2} \, \dint r \Bigg)^{1/2}\,.
\end{align*}
Because of
\begin{align*}
t \int_{\min\{t^{-2/d}a,\d_t\}}^{\delta_t} r^{d-1 +4\tau} \, \dint r
\leq \begin{cases} \frac{1}{4\tau+d} t\delta_t^{4\tau+d} &: \tau>-d/4\\
t\ln(t^{2/d}\delta_t)-t\ln(a)  &: \tau=-d/4\\
-\frac{1}{4\tau+d} t^{-1-8\tau/d}a^{4\tau+d} &: \tau<-d/4 \end{cases}
\end{align*}
and the definition of $\varrho_{\tau}(t)$, we obtain that
$$
\frac{t^2}{\varrho_{\tau}(t)^4} \int_{\min\{t^{-2/d}a,\d_t\}}^{\delta_t} r^{d-1 +4\tau} \, \dint r
\leq \begin{cases}
\frac{1}{4\tau+d} \frac{1}{t^2\delta_t^d} &: \tau>-d/4\\
\frac{\ln(t^{2/d}\delta_t)-\ln(a)}{t^2\delta_t^d} &:  \tau=-d/4\\
-\frac{1}{4\tau+d} \frac{a^{4\tau+d}}{(t^2\delta_t^d)^{2+4\tau/d}} &: \tau\in(- \frac{d }{2},- \frac{d }{4})\\
-\frac{1}{4\tau+d} \frac{a^{4\tau+d}}{(\ln(t^{2/d}\delta_t))^2} &: \tau=-d/2\\
-\frac{1}{4\tau+d} \frac{a^{4\tau+d}}{t^{6+8\tau/d}\delta_t^{4\tau+4d}} & : \tau<-d/2\,. \end{cases}
$$
Combining these estimates with \eqref{eqn:BoundMG} and the assumptions on $(\delta_t: t>0)$ leads to
\begin{equation}\label{eqn:ConvergenceMG}
\lim_{t\to\infty}\frac{M_G(a,t)}{\varrho_{\tau_i}(t)^2 \varrho_{\tau_j}(t)^2}=0.
\end{equation}
Now we put $a=1$ and use the convergence of the covariances provided by Theorem \ref{thm:CovariancesUta}, \eqref{eqn:ConvergenceMG} and \eqref{eq:Boundd3} to conclude that
$$
\lim_{t\to \infty} d_3({\bf U}_{t,1},{\bf N}_{\Sigma}) =0
$$
and, thus, ${\bf U}_{t,1} \overset{d}{\longrightarrow} {\bf N}_{\Sigma}$, as $t\to\infty$. On the other hand we have that ${\bf U}_{t,1}-(\widetilde{L}_t^{(\tau_1)}, \ldots, \widetilde{L}_t^{(\tau_m)})$ converges to zero in the $L^1$-sense, as $t\to\infty$, by Lemma~\ref{lem:LtUt}. This completes the proof.
\end{proof}

\begin{proof}[Proof of Theorem~\ref{thm:CLTRateUniv}]
We choose $a=0$ so that $U_{t,a}^{(\tau)}=L_{t}^{(\tau)}$. Note that \eqref{eq:BoundKolmogorov} and the estimates in the previous proof are still true for $a=0$ if $\tau,\tau_1,\tau_2>-d/4$. This yields the desired bound for the Kolmogorov distance.
\end{proof}

\subsection{Stable limit theorems}

After having investigated central limit theorems for the normalized length-powers, we turn now to limit theorems in which a stable random variable takes over the r\^ole of the limiting random variable. To formulate the result, define
$$
\widehat{L}_t^{(\tau)}:=\begin{cases}
t^{2\tau/d} (L_{t}^{(\tau)} - \E L_{t}^{(\tau)}) &: \tau\in(-d,- \frac{d }{2})\\
t^{-2} (L_t^{(-d)}-E_t ) &: \tau=-d\\
t^{2\tau/d} L_t^{(\tau)} &: \tau<-d
\end{cases}
$$
with
$$
E_t:= \E \frac{1}{2}\sum\limits_{(x,y)\in\eta_{t,\neq}^2} \I\Big(t^{-2/d} \Big(\frac{2}{\kappa_d V(W)}\Big)^{1/d}\leq \|x-y\| \leq \delta_t\Big)\|x-y\|^{-d}.
$$
For a unit-intensity Poisson point process $\zeta$ on $\R_+$, we define the random variables
$$
Z^{(\tau)} :=\begin{cases}
\lim\limits_{n\to\infty}\Big(\frac{2}{\kappa_d V(W)} \Big)^{\tau/d} \Big( \sum\limits_{x\in \zeta \cap [0,n]} x^{\tau/d} - \frac{1}{1+\tau/d} n^{1+\tau/d} \Big) &: \tau\in(-d,- \frac{d }{2})\\
\lim\limits_{n\to\infty}\frac{\kappa_d V(W)}{2} \Big(\sum\limits_{x\in\zeta\cap [0,n]} x^{-1} - \ln n \Big) &: \tau=-d\\
\Big(\frac{2}{\kappa_d V(W)}\Big)^{\tau/d} \sum\limits_{x\in\zeta} x^{\tau/d} &: \tau<-d\,,
\end{cases}
$$
where $\lim\limits_{n\to\infty}$ indicates the a.s.\ limit for $n\in\N$. We remark that the so-defined random variables $Z^{(\tau)}$ are $\alpha$-stable with $\alpha=d/|\tau|\in(0,2)$.

To complement the central limit theorems from the previous section, we now consider length-power functionals $L_t^{(\tau)}$ with powers $\tau \leq -d$ and also $\tau\in(-d,-d/2)$ in case that $t^{3+ \frac{4\tau }{d}}\d_t^{2(d+\tau)}\to 0$. Recall that the latter expression is required to converge to infinity to have a central limit theorem.

\begin{theorem}\label{thm:StableMulti}
Let $-d/2>\tau_1>\tau_2>\ldots>\tau_m$ for $m\in\N$ and assume that $t^2\d_t^d\to\infty$ and
$t^{3+ \frac{4\tau_1 }{d}}\d_t^{2(d+\tau_1)}\to 0$, as $t\to\infty$. Then,
$$
\big(\widehat{L}_t^{(\tau_1)},\ldots,\widehat{L}_t^{(\tau_m)}\big)\overset{d}{\longrightarrow}\big(Z^{(\tau_1)},\ldots,Z^{(\tau_m)}\big)\,,\qquad\text{as}\qquad t\to\infty\,.
$$
\end{theorem}

For an univariate version of Theorem \ref{thm:StableMulti} in the case that $\tau<-d$ we refer to Corollary 7.10 in \cite{DST}. The proof of Theorem~\ref{thm:StableMulti} is prepared by the following lemma.

\begin{lemma}\label{lem:L2approximation}
For $\tau \geq -d$, $a>0$ and $t,\delta_t>0$ such that $t^{-2/d}a\leq \delta_t$ one has that
\begin{align*}
& \V \frac{1}{2} \sum_{(x,y)\in\eta^2_{t,\neq}} \I(t^{-2/d}a \leq \|x-y\|\leq \delta_t) \|x-y\|^\tau\\
& \leq t^3 V(W) d^2 \kappa_d^2 \bigg( \int_{t^{-2/d}a}^{\delta_t} r^{\tau+d-1} \, \dint r \bigg)^2 + \frac{t^2}{2} V(W) d\kappa_d \int_{t^{-2/d}a}^{\delta_t} r^{2\tau+d-1} \, \dint r\,.
\end{align*}
\end{lemma}
\begin{proof}
By the same arguments as in the proof of Theorem~\ref{thm:AsymptoticCovariance}, we obtain that
\begin{align*}
& \V \frac{1}{2} \sum_{(x,y)\in\eta^2_{t,\neq}} \I(t^{-2/d}a \leq \|x-y\|\leq \delta_t) \|x-y\|^\tau\\
& = t^3 \int\limits_W \Bigg( \int\limits_W \I(t^{-2/d}a\leq \|x-y\|\leq \delta_t) \|x-y\|^\tau \, \dint y \Bigg)^2 \, \dint x \\
&\qquad\qquad + \frac{t^2}{2} \int\limits_{W^2} \I(t^{-2/d}a \leq \|x-y\|\leq \delta_t) \|x-y\|^{2\tau} \, \dint (x,y) \allowdisplaybreaks\\
& \leq t^3 V(W) d^2 \kappa_d^2 \bigg( \int_{t^{-2/d}a}^{\delta_t} r^{\tau+d-1} \, \dint r \bigg)^2 + \frac{t^2}{2} V(W) d\kappa_d \int_{t^{-2/d}a}^{\delta_t} r^{2\tau+d-1} \, \dint r\,.
\end{align*}
This yields the result.
\end{proof}

\begin{proof}[Proof of Theorem~\ref{thm:StableMulti}]
Throughout this proof we use the abbreviation $c_W:=2/(\kappa_dV(W))$. Let $M\in\{1,\ldots,m\}$ be such that $-d/2>\tau_1>\ldots>\tau_{M-1}>\tau_{M}=-d>\tau_{M+1}>\ldots>\tau_m$, whenever such an index exists (if not, some of the cases considered below do not occur and can therefore be omitted). Instead of the functionals $L_t^{(\tau_1)},\ldots,L_{t}^{(\tau_m)}$, we first deal with their truncated versions $L_{t,a}^{(\tau_1)},\ldots,L_{t,a}^{(\tau_m)}$ with $a>0$; see \eqref{eqn:DefinitionLta} for the definition. For $i\in\{1,\ldots,M-1\}$ we notice that
\begin{align*}
\lim_{t\to\infty} t^{2\tau_i/d}\,\E L_{t,a}^{(\tau_i)}
& = \lim_{t\to\infty} \frac{t^{2+2\tau_i/d}}{2} \int\limits_{W^2} \I(\|x-y\|\leq t^{-2/d}a) \|x-y\|^{\tau_i} \, \dint(x,y)\\
& = \frac{d\kappa_d V(W)}{2} \int_0^{a} r^{\tau_i+d-1} \, \dint r = \frac{1}{c_W(1+\tau_i/d)} a^{\tau_i+d}\,.
\end{align*}
Moreover, we define
$$
E_{t,a}:= \E \frac{1}{2}\sum\limits_{(x,y)\in\eta_{t,\neq}^2} \I(t^{-2/d} c_W^{1/d}\leq \|x-y\| \leq \min\{t^{-2/d}a,\delta_t\})\|x-y\|^{-d}
$$
and see that
\begin{align*}
 \lim_{t\to\infty} t^{-2} E_{t,a} 
& = \lim_{t\to\infty } \frac{1}{2} \int\limits_{W^2} \I(t^{-2/d} c_W^{1/d}\leq \|x-y\| \leq t^{-2/d}a)\|x-y\|^{-d} \, \dint(x,y)\\
& = \frac{d \kappa_d}{2} V(W) \int_{c_W^{1/d}}^a r^{-d+d-1} \, \dint r  = \frac{d}{c_W} (\ln a - \ln c_W^{1/d} )= \frac{1}{c_W} \ln \frac{a^d}{c_W}\,.
\end{align*}
Now, an application of Theorem~\ref{thm:PPPConv} shows that the random vector
\begin{equation}\label{eqn:vectorLta}
\bigg( \Big(t^{2\tau_i/d} (L_{t,a}^{(\tau_i)} - \E L_{t,a}^{(\tau_i)})\Big)_{i=1}^{M-1}, t^{-2}(L_{t,a}^{(-d)}-E_{t,a}), \Big(t^{2\tau_{i}/d}L_{t,a}^{(\tau_{i})}\Big)_{i=M+1}^m\bigg)
\end{equation}
converges in distribution, as $t\to\infty$, to the random vector
\begin{equation*}
\begin{split}
{\bf Z}_a:=&\bigg( \Big( c_W^{\tau_i/d}\sum_{x\in \zeta \cap [0,a^d/c_W]} x^{\tau_i/d} - \frac{a^{d+\tau_i}}{c_W(1+\tau_i/d)} \Big)_{i=1}^{M-1},\\
& c_W^{-1} \sum_{x\in\zeta\cap [0,a^d/c_W]} x^{-1} - c_W^{-1} \ln \frac{a^d}{c_W}, \Big(c_W^{\tau_{i}/d}\sum_{x\in\zeta\cap[0,a^d/c_W]}x^{\tau_{i}/d}\Big)_{i=M+1}^m\bigg)\,.
\end{split}
\end{equation*}
Next, we notice that Lemma~\ref{lem:L2approximation} implies that
$$
\V[ t^{2\tau_i/d}(L^{(\tau_i)}_t-L_{t,a}^{(\tau_i)}) ]\leq \frac{d^2\kappa_d^2 V(W)}{(\tau_i+d)^2} t^{3+4\tau_i/d} \delta_t^{2\tau_i+2d}+\frac{d\kappa_d V(W)}{2(-2\tau_i-d)}a^{2\tau_i+d}
$$
for all $i\in\{1,\ldots,M-1\}$, which tends to zero, as $t\to\infty$ and $a\to\infty$, by our assumptions on $\tau_1,\ldots,\tau_{M-1}$ and $(\d_t:t>0)$. It also yields that
\begin{align*}
& \V[ t^{-2} (L_t^{(-d)} -E_t - L_{t,a}^{(-d)}+E_{t,a}) ] \\
& \leq d^2\kappa_d^2 V(W) t^{-1} (\ln \delta_t - \ln(t^{-2/d} a))^2  + \frac{\kappa_d V(W)}{2} a^{-d}\,,
\end{align*}
which tends to zero as well, as $t\to\infty$ and $a\to\infty$ such that $\ln(a)/t\to 0$. Finally, a similar computation as in the proof of Theorem~\ref{thm:Expectation} shows that for $i\in\{M+1,\ldots,m\}$ and $t^{-2/d}a\leq \delta_t$,
\begin{align*}
\E|t^{2\tau_i/d}(L_t^{(\tau_i)}-L_{t,a}^{(\tau_i)})| & \leq \frac{d\kappa_dV(W) t^{2+2\tau_i/d}}{2} \int_{t^{-2/d}a}^{\delta_t} r^{\tau_i+d-1} \, \dint r\\
& \leq \frac{d\kappa_dV(W) t^{2+2\tau_i/d}}{2(\tau_i+d)} (t^{-2/d}a)^{\tau_i+d} = \frac{d\kappa_dV(W) }{ 2(\tau_i+d)}\,a^{d+\tau_i}\,.
\end{align*}
Here, the right-hand side also tends to zero, as $a\to\infty$, by our assumptions on $\tau_{M+1},\ldots,\tau_m$. Summarizing, we have shown that, as $t\to\infty$ and $a\to\infty$ such that $\ln(a)/t\to 0$, the random vector in \eqref{eqn:vectorLta} converges to $(\widehat{L}_t^{(\tau_1)},\hdots,\widehat{L}_t^{(\tau_m)})$ in distribution. Since the random vector ${\bf Z}_a$  converges in distribution to the random vector $(Z^{(\tau_1)},\ldots,Z^{(\tau_m)})$, as $a\to\infty$, this completes the proof of the theorem.
\end{proof}

\begin{remark}\rm
In view of Theorem~\ref{thm:CLTMulti} and Theorem~\ref{thm:StableMulti} it remains to investigate $L_t^{(\tau)}$ with $\tau\in(-d,-d/2)$ in the case that $t\to\infty$, $t^2\delta_t^d\to\infty$ and $t^{3+ \frac{4\tau }{d}}\delta_t^{2(d+\tau)}\to c\in(0,\infty)$. In this situation we can decompose $L_t^{(\tau)}$ as
\begin{equation}\label{eq:Zwischenfall}
\begin{split}
L_t^{(\tau)} = & \frac{1 }{2}\sum_{(x,y)\in\eta_{t,\neq}^2}\I(\|x-y\|<t^{-2/d}a)\,\|x-y\|^\tau\\
 & + \frac{1 }{2}\sum_{(x,y)\in\eta_{t,\neq}^2}\I(t^{-2/d}a\leq\|x-y\|\leq\delta_t)\,\|x-y\|^\tau\,,
\end{split}
\end{equation}
where $a>0$. For $a\to\infty$ and $t\to\infty$ in the right way one can argue as in the proofs of Theorem \ref{thm:CLTMulti} and Theorem \ref{thm:StableMulti} that under a suitable normalization the second term in \eqref{eq:Zwischenfall} converges to a Gaussian random variable, while the first term in \eqref{eq:Zwischenfall} converges to a stable random variable. Since we are not able to describe the joint distribution of the two limiting random variables, we do not further investigate this exceptional case.
\end{remark}

\subsection{Compound Poisson limit theorems}

In the previous subsections we derived multivariate central and stable limit theorems for length-power functionals under the hypothesis that $t^2\delta_t^d\to\infty$, as $t\to\infty$.
We shall in this subsection discuss the remaining case in which $t^2 \delta_t^d\to c\in[0,\infty)$, as $t\to\infty$. The following result is a reformulation of Theorem \ref{thm:PPPConv} with $a=c^{1/d}$.

\begin{theorem}\label{thm:CompoundPoisson}
Assume that $t^2 \, \delta_t^d \to c \in [0,\infty)$, as $t\to\infty$, and let $(X_j)_{j\in\N}$ be an i.i.d.\ family of uniformly distributed random variables in $[0,c]$ and let $N$ be a Poisson random variable with mean $\kappa_d V(W)c/2$ independent of $(X_j)_{j\in\N}$. Then, as $t\to\infty$,
$$
\Big( t^{2\tau_1/d} L_t^{(\tau_1)},\ldots,t^{2\tau_m/d} L_t^{(\tau_m)} \Big) \overset{d}{\longrightarrow} \Big( \sum_{j=1}^N X_j^{\tau_1/d},\ldots,\sum_{j=1}^N X_j^{\tau_m/d}\Big)
$$
for $\tau_1,\hdots,\tau_m\in\R$, $m\in\N$.
\end{theorem}

Note that the components of the limiting random vector in Theorem \ref{thm:CompoundPoisson} are com\-pound Poisson distributed. For $m=1$ and $\tau_1=0$, this yields that $L_t^{(0)}$ converges in distribution to a Poisson random variable with mean $\kappa_d V(W)c/2$. This special case is discussed in Theorem 4.12 in \cite{LachiezeReyPeccatiI} and in Theorem 5.1 in \cite{Peccati2011} a rate of convergence is derived in this univariate limit theorem. A one-dimensional quantitative version of Theorem \ref{thm:CompoundPoisson} can be found as Corollary 7.6 in \cite{DST}, where the rate of convergence is measured by the total variation distance.

\section{Concentration inequalities}\label{sec:LDI}

This section concerns concentration inequalities for the functionals $L_t^{(\tau)}$. This was a widely open field for a long time due to the lack of general deviation inequalities, or even efficient methods to prove deviation inequalities for special functionals.
In the last years a first attempt was made by Eichelsbacher, Rai\v{c} and Schreiber \cite{EichelsbacherRaicSchreiber2013} proving deviation inequalities for a class of local Poisson functionals. Their result can be applied to the random variable $L_t^{(\tau)}$ for special choices of $\tau$ and $(\delta_t:t>0)$.

Being more precise, in the paper by Eichelsbacher, Rai\v{c} and Schreiber \cite{EichelsbacherRaicSchreiber2013} deviation inequalities for {\it stabilizing} Poisson functionals were derived. But due to the dependence on $t$ and $\delta_t$ in the present paper, this result can be only applied in the thermodynamic regime, where $\delta_t=\tilde{\delta} t^{-1/d}$ with some fixed $\tilde{\delta}>0$. In this case, the following concentration inequality can be obtained, whose proof is postponed to the end of Subsection~\ref{subsec:Poisson}.

\begin{proposition}\label{prop:ERS}
Suppose that $(\delta_t:t>0)$ is such that $\delta_t=\tilde{\delta}t^{-1/d}$ for some fixed $\tilde{\delta}>0$. Then for any $\tau\geq 0$ there is a constant $c>0$ depending on $\tau$, $W$ and $\tilde{\delta}$ such that
 \begin{equation*}
\P( | L^{(\tau)}_t  - \E L^{(\tau)}_t |\geq u)
\leq
{\rm exp}\big(-c\,\min\big\{ t^{\frac{2\tau-d}d}u^2,\,t^{\frac{\tau}{3d}} u^{\frac 13},\,t^{\frac{3\tau-d}{4d}}u^{\frac 34}\big\} \big)
\end{equation*}
for $u>0$ and $t\geq 1$.
\end{proposition}

The question to prove deviation inequalities in the general case was still open. Here we introduce a new method, linking $L_t^{(\tau)}$ to the {\it convex distance} of Poisson point processes and then applying a recent deviation inequality for the convex distance. This yields -- see Theorem~\ref{thm:LDIPoisson} --  for the first time concentration inequalities for general $\delta_t$ and $ \tau \geq 0$.

Continuing our line of research, a very recent progress was the work of Bachmann and Peccati \cite{BachmannPeccati}, proving large deviation inequalities for certain general Poisson functionals. These yield exponential upper bounds for the upper tail probability $\P( L^{(\tau)}_t  - \E L^{(\tau)}_t \geq u)$ in the case where $\tau\in[0,1]$. More precisely, Corollary 7.4 in \cite{BachmannPeccati} says that
\begin{equation*}
\P( L^{(\tau)}_t  - \E L^{(\tau)}_t \geq u) \leq {\rm exp}(-I_1(u))\,,
\end{equation*}
where $I_1(u)\approx cu^{\frac 12} (\ln u)^{\frac 12} $ with a constant $c=c(t,\delta_t,\tau)\in(0,\infty)$ that does not depend on $u$ but on the intensity parameter $t$, the distance threshold $\delta_t$ and the exponent $\tau$. Here, recall, we write $f(u)\approx g(u)$ if $f(u)/g(u)\to 1$, as $u\to\infty$. The precise form of $I_1(u)$ is rather complicated and to deduce the dependence on $t$ and $\d_t$ seems to be demanding.
In a follow--up paper to our investigations presented here and the paper by  Bachmann and Peccati,  Bachmann and Reitzner \cite{BachmannReitzner} are generalizing both approaches to subgraph counts of the Gilbert graph.

\medskip
In the last subsection we modify our approach to settle also the corresponding question for a binomial process.

\subsection{The Poisson point process and the convex distance}\label{subsec:Poisson}

In this subsection we consider the case of an underlying homogeneous Poisson point process of intensity $t>0$ within a compact convex observation window $W$ having interior points and prove the following concentration inequality for $L^{(\tau)}_t$.

\begin{theorem}\label{thm:LDIPoisson}
Let $\tau\geq 0$, let $m_t$ be the median of $L_t^{(\tau)}$ and define
\begin{eqnarray*}
x^*(u,t)
&=&
\inf_{s>0}\frac{\ln(t V(W))+t\kappa_d \delta_t^d(e^s-1)}{2 t\kappa_d \delta_t^d s}
\\ &&+
\sqrt{\frac{u^2}{8 t^2\kappa_d^2 \delta_t^{2d+\tau} (u+m_t)s}
+\bigg( \frac{\ln(tV(W)) +t\kappa_d \delta_t^d(e^s-1)}{2 t\kappa_d \delta_t^d s} \bigg)^2}\,.
\end{eqnarray*}
Then, for $u>0$,
\begin{equation}\label{eq:LDIPoisson}
\P( | L^{(\tau)}_t -m_t |\geq u) \leq 8 \exp{-\frac{u^2}{8 t\kappa_d\delta_t^{d+\tau}x^*(u,t)(u+m_t)}}
\end{equation}
and, in particular,
\begin{equation}\label{eq:LDIPoissons1}
\P( | L^{(\tau)}_t -m_t |\geq u)
\leq
8 \exp{-\min\bigg\{\frac {u^2 }{ C_{t,W} (u+m_t)},\frac {u }{ D_t\sqrt{(u+m_t)}}\bigg\}}
\end{equation}
with
$ C_{t,W}= 16 (\ln(tV(W))\d_t^{\tau} +2t\kappa_d \delta_t^{d+\tau})$ and
$D_t =  4 \sqrt{2 \d_t^\tau}$.
\end{theorem}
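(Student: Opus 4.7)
The plan is to mimic the proof of Theorem \ref{thm:LDIBinomial} verbatim, replacing the binomial Chernoff bound by its Poisson counterpart and Talagrand's convex distance on $W^n$ by its analog on the configuration space $\mathbf{N}(W)$ developed in \cite{Reitzner13}. First I would introduce the local functional
$$L_t^{(\alpha)}(\eta_t;x) = \sum_{y\in\eta_t\setminus\{x\}} \I(\|x-y\|\leq\delta_t)\,\|x-y\|^\alpha, \qquad x\in\eta_t,$$
so that $L_t^{(\alpha)} = \tfrac{1}{2}\sum_{x\in\eta_t} L_t^{(\alpha)}(\eta_t;x)$. By the Slivnyak-Mecke theorem, conditional on $x \in \eta_t$, the reduced Palm version of $\eta_t$ is again a Poisson process of intensity $t$, so $L_t^{(0)}(\eta_t;x)$ is Poisson distributed with parameter $t\,V(B^d(x,\delta_t)\cap W) \leq E:= t\kappa_d \delta_t^d$. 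The Poisson Chernoff estimate $\P(X\geq B) \leq \inf_{s\geq 0} e^{\lambda(e^s-1)-sB}$ for $X\sim \mathrm{Po}(\lambda)$, combined with a Markov-Mecke union bound, then yields
$$\P\bigl(\exists x\in\eta_t:\ L_t^{(0)}(\eta_t;x)\geq B\bigr) \;\leq\; t\,V(W)\,\inf_{s\geq 0}\,e^{E(e^s-1)-sB}\,,$$
which replaces \eqref{eq:Gegenereignis}.

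Next, on the event that every local degree is at most $B$, the same algebraic manipulation as in the binomial proof applies: for any $\zeta\in\mathbf{N}(W)$ we have $L_t^{(\alpha)}(\eta_t) \leq L_t^{(\alpha)}(\zeta) + \sum_{x\in\eta_t} L_t^{(\alpha)}(\eta_t;x)\,\I(x\notin\zeta)$ and $\sum_{x\in\eta_t} L_t^{(\alpha)}(\eta_t;x)^2 \leq 2B\delta_t^\alpha L_t^{(\alpha)}(\eta_t)$. Choosing the unit vector proportional to $(L_t^{(\alpha)}(\eta_t;x))_{x\in\eta_t}$ in the variational definition of the Poisson convex distance $d_T$ from \cite{Reitzner13} gives
$$d_T(\eta_t,A) \;\geq\; \frac{1}{\sqrt{2B\delta_t^\alpha}}\,\frac{L_t^{(\alpha)}(\eta_t)-\sup_{\zeta\in A}L_t^{(\alpha)}(\zeta)}{\sqrt{L_t^{(\alpha)}(\eta_t)}}\,.$$
Reitzner's Poisson analog of Talagrand's inequality $\P(A)\,\P(d_T(\eta_t,A)\geq s)\leq e^{-s^2/4}$ then plays the role of \eqref{eq:Talagrand}.

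Taking $A=\{\zeta:L_t^{(\alpha)}(\zeta)\leq m_t\}$ and using the monotonicity of $u\mapsto u/\sqrt{u+m_t}$ produces the upper tail exactly as in \eqref{eq:LDINn1}; the lower tail follows analogously from $A=\{\zeta:L_t^{(\alpha)}(\zeta)\leq m_t-u\}$, as in \eqref{eq:LDINn2}. Combining both bounds and writing $B=E\,x$ gives the Poisson analog of Proposition \ref{prop:BoundBinomial}:
$$\P(|L_t^{(\alpha)}-m_t|\geq u) \;\leq\; \inf_{s\geq 0,\,x>0}\Bigl\{ 4\exp\!\Bigl(-\tfrac{u^2}{8t\kappa_d\delta_t^{d+\alpha}(u+m_t)x}\Bigr) + 4tV(W)\exp\!\bigl(t\kappa_d\delta_t^d(e^s-1-sx)\bigr)\Bigr\}\,.$$

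Finally, for fixed $s$, balancing the two exponentials is a quadratic equation in $x$ whose positive root is precisely $x^*(u,t)$, yielding \eqref{eq:LDIPoisson}; specializing to $s=1$ and using $\sqrt{a^2+b^2}\leq a+b$ together with $\max\{a,b\}\leq a+b$ produces \eqref{eq:LDIPoissons1}. The only nontrivial step is verifying that the convex-distance inequality from \cite{Reitzner13} applies in the form stated above; the Poisson Chernoff bound and the quadratic optimization are routine once the binomial template is in place.
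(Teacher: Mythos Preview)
Your proposal is correct and follows essentially the same route as the paper: the paper first establishes the Poisson analogue of Proposition~\ref{prop:BoundBinomial} (stated there as Proposition~\ref{th:LDIallgPP}) via the local functionals, the Mecke--Chernoff degree bound, the comparison inequality $L_t^{(\alpha)}(\eta_t)\le L_t^{(\alpha)}(\zeta)+\sum_{x\in\eta_t}L_t^{(\alpha)}(x;\eta_t)\I(x\notin\zeta)$, and Reitzner's convex-distance inequality, and then derives \eqref{eq:LDIPoisson} and \eqref{eq:LDIPoissons1} by solving the quadratic in $x$ and specializing to $s=1$, exactly as you outline. The only cosmetic difference is that the paper bounds $x^*(u,t)\big|_{s=1}$ by $2\max\{\cdot,\cdot\}$ rather than via $\sqrt{a^2+b^2}\le a+b$, but this leads to the same final inequality.
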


\begin{remark}\label{rem:Comparison}\rm
In the thermodynamic regime we can compare our bound with that of Eichelsbacher, Rai\v{c} and Schreiber stated in Proposition~\ref{prop:ERS}. The second inequality in Theorem~\ref{thm:LDIPoisson} and $m_t\leq 2\E L_t^{(\tau)}$ deliver up to a constant factor the exponent
$$
-\min\Big\{ \frac{u^2\,t^{\frac{2\tau}d} }{t\,\ln t}\,,  \frac{u\,t^{\frac{\tau}d} }{ \sqrt{t}}\,,\sqrt{u}\,t^{\frac{\tau}{2d}}\Big\}\,,
$$
whereas Proposition~\ref{prop:ERS} yields
$$
-\min\Big\{ \frac{u^2\,t^{\frac{2\tau}d} }{t}\,,  \frac{u^{\frac 34}\,t^{\frac{3\tau}{4d}} }{t^{\frac{1}4}}\,,u^{\frac{1}3}\,t^{\frac{\tau}{3d}}\Big\}\,.
$$
This means that for $u$ such that $u^2$ dominates the minimum, the re-scaling with respect to the intensity parameter $t$ in the result of Eichelsbacher, Rai\v{c} and Schreiber is better than ours. On the other hand, our worst $u$-exponent is $1/2$, whereas it is $1/3$ in Proposition~\ref{prop:ERS}. Moreover, one should notice that Proposition~\ref{prop:ERS} deals with an inequality for $L_t^{(\tau)}-\E L_t^{(\tau)}$, whereas in our result $\E L_t^{(\tau)}$ is replaced by the median $m_t$.
\end{remark}

\bigskip
For the proof of Theorem \ref{thm:LDIPoisson} a local version of $L_t^{(\tau)}$ plays an essential r\^ole. Namely, for a finite counting measure $\nu$ and $x \in \nu$ let us define
$$ L_t^{(\tau)} (x; \nu) :=  \sum_{y \in \nu} \I(\| x-y\| \leq \d_t) \| x-y \| ^\tau , $$
and thus we may write
$ L^{(\tau)}_t(\nu) = \frac 12\sum_{x\in\nu} L_t^{(\tau)} (x; \nu) $, and hence  $L_t^{(\tau)}(\eta_t) = L_t^{(\tau)}$. For two finite counting measures $\nu$ and $ \zeta$ we define the (set-)difference $\nu \backslash \zeta$ by
\begin{equation*}
\nu \backslash \zeta := \sum_{x \in W} \max\{\nu(x)-\zeta(x),0\} \, \varepsilon_x,
\end{equation*}
which is again a finite counting measure. Assume now that besides of $\eta_t$ a second point set $\zeta \in \bN(W)$ is chosen, which might have a non-trivial intersection with $\eta_t$. Each edge between points in $\eta_t$ either belongs to an edge between points in $\zeta$ if both endpoints are contained in $\eta_t \cap \zeta$, or is  counted at least once in some $L_t^{(\tau)}(x;\eta_t)$ for an endpoint $x \in {\eta_t} \backslash \zeta$. We thus find
\begin{equation}\label{eq:UGvonWeiterOben}
L^{(\tau)}_t(\eta_t)\leq L_t^{(\tau)} (\zeta)  + \sum_{x \in {\eta_t}\backslash \zeta} L_t^{(\tau)}(x; \eta_t)\,.
\end{equation}
To derive our deviation inequality we use an analogue of Talagrand's convex distance for Poisson point processes, which has been introduced in \cite{Reitzner13}.
For $\nu \in \bN(W) $ and $ A \subset \bN(W)$ it is given by
$$
d_T^\pi ({\nu},A)  : = \max_{ \| u \|_{2, \nu} \leq 1 }\  \min_{ \zeta \in A}
\sum_{x\in\nu \backslash \zeta} u(x)\,,
$$
where $u: W \to \R_+$ is a non-negative measurable function and $\| u \|_{2 ,\nu}^2:= \sum_{x\in\nu} u(x)^2$. We now assume that $L_t^{(\tau)}(\eta_t)\neq 0$.
For $x \in W$, let us put
$$
u(x) := \frac{1}{\| L_t^{(\tau)} (\,\cdot\,; \eta_t) \|_{2, \eta_t} } L_t^{(\tau)} (x; \eta_t)\,,
$$
which gives $\| u \|_{2, \eta_t}^2=1$.
Using \eqref{eq:UGvonWeiterOben} we rewrite $L^{(\tau)}_t$ in terms of the convex distance as follows:
\begin{equation}\label{eq:ZwischenschrittEingefuegt}
\begin{split}
d_T^\pi ({\eta_t},A)
&=
\max_{ \| u \|_{2, \eta_t} \leq 1 }\  \min_{ \zeta \in A}
\sum_{x\in \eta_t \backslash \zeta} u(x)\\
& \geq
\min_{ \zeta \in A}
 \frac{1}{\| L_t^{(\tau)} (\,\cdot\,; \eta_t) \|_{2, \eta_t} } \sum_{x\in \eta_t \backslash \zeta}L_t^{(\tau)} (x; \eta_t)
\\ & \geq \min_{ \zeta \in A}
\frac{1}{\| L_t^{(\tau)} (\,\cdot\,; \eta_t) \|_{2, \eta_t} }
\Big( L_t^{(\tau)}(\eta_t) - L_t^{(\tau)}  (\zeta) \Big)\,.
\end{split}
\end{equation}
We now assume that $L_t^{(\tau)} (x; \eta_t) \leq B $ for all $x \in \eta_t$ for some $B>0$, in which case
$$ \| L_t^{(\tau)} (\,\cdot\,; \eta_t) \|_{2, \eta_t}^2  =
\sum_{x\in \eta_t} L_t^{(\tau)} (x; \eta_t)^2 \leq
B  \sum_{x\in\eta_t} L_t^{(\tau)} (x; \eta_t)
= 2B L_t^{(\tau)}(\eta_t)\,.$$
In view of \eqref{eq:ZwischenschrittEingefuegt} this gives
\begin{equation}\label{eq:dTpi}
d_T^\pi ({\eta_t},A)\geq \frac{1}{\sqrt {2 B } }
\min_{ \zeta \in A}  \frac{L^{(\tau)}_t(\eta_t) - L_t^{(\tau)}  (\zeta)}{ \sqrt{L_t^{(\tau)} ({\eta_t})}}
\end{equation}
if $L_t^{(\tau)}(\eta_t)\neq 0$, $L_t^{(\tau)}(x;\eta_t)\leq B$ for all $x\in\eta_t$ and $L_t^{(\tau)}(\eta_t)\geq L_t^{(\tau)}(\zeta)$ for all $\zeta\in A$.

Inequality (\ref{eq:dTpi}) links $d_T^\pi$ to $L^{(\tau)}_t$. We are now in the position to recall the main result from \cite{Reitzner13}, in which Talagrand's large deviation inequality for binomial point processes was extended to finite Poisson point processes. Namely, for $A \subset {\bf N}(W)$ and $s\geq 0$ we have that
\begin{equation}\label{eq:TalagrandPoiss}
 \P (A) \P  \left( d_T^\pi({\eta_t}, A) \geq s  \right) \leq  \exp{- \frac {s^2} 4} \, .
\end{equation}

Let us make the relation between $d_T^\pi $ and $L^{(\tau)}_t(\eta_t)$  more explicit.
First, define
$ A = \{ \nu \in \bN(W):\ L^{(\tau)}_t (\nu) \leq m_t \} $,
where $m_t$ is the median of $L^{(\tau)}_t(\eta_t)$, and let $u>0$. By definition of the median, we have $ \P(A) \geq \frac 12$.
Since the function $s\mapsto {s}/{\sqrt {s+ m_t}}$ is increasing, $ L^{(\tau)}_t(\eta_t) \geq u+ m_t$ and
 \eqref{eq:dTpi} imply that
$d_T (\eta_t,A)   \geq
\frac{1}{\sqrt{2B}} \ \frac{u}{\sqrt{u+ m_t}}  $.
Together with  \eqref{eq:TalagrandPoiss}, this yields
\begin{align}
& \P( L^{(\tau)}_t (\eta_t) \geq u+ m_t) \nonumber\\
& \leq   \nonumber
\P( L^{(\tau)}_t (\eta_t) \geq u+ m_t,\ \forall x \in \eta_t:\  L^{(\tau)}_t(x;\eta_t) \leq B  )\\
& \quad +  \P( \exists x \in \eta_t:\ L_t^{(\tau)}(x; \eta_t) >B) \nonumber\\
 &\leq   \nonumber
\P  \Big( d_T(\eta_t,A)  \geq \frac 1{\sqrt {2B }} \frac {u }{\sqrt{ u+ m_t}} \Big)
+  \P( \exists x \in \eta_t:\ L_t^{(\tau)}(x; \eta_t) >B)
\\ & \leq  \label{eq:LDINt1}
2 \exp{- \frac{u^2} {8B (u+m_t) } }
+  \P( \exists x \in \eta_t:\ L_t^{(\tau)}(x; \eta_t) >B).
\end{align}
In the following we assume that $m_t\neq0$. Similarly as above, for $u \in (0,m_t]$, putting $A=\{\nu:\  L^{(\tau)}_t(\nu) \leq m_t-u\}$, the monotonicity of $(s-a)/\sqrt s$ for $a\geq 0$ together with the assumption that $L^{(\tau)}_t (\eta_t) \geq m_t$ imply
$$
\frac{L^{(\tau)}_t(\eta_t) -(m_t-u)}{\sqrt{L^{(\tau)}_t(\eta_t)}} \geq  \frac{m_t -(m_t-u)}{\sqrt{ m_t}}\,.
$$
In this case we have
$$ d_T( \eta_t, A) \geq \frac{1}{\sqrt{2B }}\min_{\zeta \in A}  \frac {L^{(\tau)}_t(\eta_t)- L^{(\tau)}_t(\zeta)}{\sqrt{L^{(\tau)}_t(\eta_t)}}
\geq \frac{1}{\sqrt{2B }}\  \frac u{\sqrt{ m_t}}\,,
$$
which, again in view of \eqref{eq:TalagrandPoiss}, yields
\begin{align*}
\frac 12 \leq \P( L^{(\tau)}_t(\eta_t) \geq m_t)
& \leq \P( L^{(\tau)}_t (\eta_t) \geq m_t,\ \forall x\in\eta_t:\  L^{(\tau)}_t(x;\eta_t) \leq B  ) \\ 
& \hskip3cm +  \P( \exists x \in \eta_t:\ L_t^{(\tau)}(x; \eta_t) >B)\\ 
&\leq \P \Big( d_T( \eta_t, A)  \geq \frac 1{\sqrt{2B }} \ \frac u{\sqrt{ m_t}} \Big)\\
& \hskip3cm + \P( \exists x \in \eta_t:\ L_t^{(\tau)}(x; \eta_t) >B)\\
&\leq \frac 1{\P(L^{(\tau)}_t(\eta_t) \leq m_t-u)} \exp{- \frac {u^2}{8B m_t}}\\
& \hskip3cm + \P( \exists x \in \eta_t:\ L_t^{(\tau)}(x; \eta_t) >B),
\end{align*}
and we deduce that
$$
\P(L^{(\tau)}_t(\eta_t) \leq m_t-u)\leq2 \exp{\!\!- \frac {u^2}{8B m_t}}+  2 \P( \exists x \in \eta_t:\ L_t^{(\tau)}(x; \eta_t) >B)\,.
$$
For $m_t=0$ and $u>0$ the probability on the left-hand side is zero. Combining the previous inequality with (\ref{eq:LDINt1}) implies the following concentration inequality:

\begin{proposition}\label{th:LDIallgPP}
For $\tau\in\R$ and $u > 0 $ we have
\begin{eqnarray*}
\P( | L^{(\tau)}_t -m_t |\geq u)
&\leq&
\inf_{B>0} \Big\{
4 \exp{- \frac {u^2 }{ 8 B (u+m_t) }}
\\ \nonumber && \hskip2.8cm
+ 3\,  \P( \exists x \in \eta_t:\ L_t^{(\tau)}(x; \eta_t) >B) \Big\}
\end{eqnarray*}
where $m_t$ is the median of $L^{(\tau)}_t$.
\end{proposition}

In the next step we provide a bound for $\P( \exists x \in \eta_t:\ L_t^{(\tau)}(x; \eta_t) >B)$.
\begin{lemma}\label{le:P-ex-x}
For $\tau\geq 0$ and $B>0$ we have
$$ \P( \exists x \in \eta_t:\ L_t^{(\tau)}(x; \eta_t) >B)
\leq
t V(W) \inf_{s\geq 0} \exp{E(e^s-1)-s\d_t^{- \tau}B}
$$
with $E=t \kappa_d \d_t^d  $.
\end{lemma}
\begin{proof}
Observe that for some fixed $x\in W$, $L_t^{(0)}(x; \eta_t\cup \{x\} ) = \eta_t(B^d(x, \d_t))$ is a Poisson distributed random variable with mean
\begin{equation*}
 \E \eta_t(B^d(x, \d_t))  = t  V(B^d(x, \d_t)\cap W) \leq t \kappa_d \d_t^d  =: E\,.
\end{equation*}
Mecke's formula \eqref{eq:Mecke} gives, for $B>0$,
\begin{eqnarray*}
\P( \exists x \in \eta_t:\ L_t^{(\tau)}(x; \eta_t) >B)
&\leq&
\E \sum_{x\in\eta_t} \I(L_t^{(\tau)}(x; \eta_t) >B)
\\ &\leq&
 t \int\limits_W \P( \eta_t(B^d(x, \d_t)) > \d_t^{- \tau} B) \, \dint x
\\ & \leq  &
t V(W) \inf_{s\geq 0} \exp{E(e^s-1)-s\d_t^{- \tau}B}\,,
\end{eqnarray*}
where in the last line we used
the Chernoff bound for the Poisson distribution: if $X$ is a Poisson random variable with mean $a>0$ and $u> 0$, we have that
$$
\P(X\geq u)\leq \inf_{s\geq 0}\E \exp{sX-su}=\inf_{s\geq 0}\exp{a(e^s-1)-su}.
$$
This completes the proof.
\end{proof}

\begin{proof}[Proof of Theorem~\ref{thm:LDIPoisson}]
Lemma~\ref{le:P-ex-x} and Proposition~\ref{th:LDIallgPP} with the substitution $B=t\kappa_d\delta_t^{d+\tau}x$ give
\begin{eqnarray*}\label{eq:BoundPoisson}
\P( | L^{(\tau)}_t -m_t |\geq u)
&\leq&
\inf_{s\geq 0, x>0}
4 \exp{- \frac {u^2 }{ 8 t \kappa_d \d_t^{d+\tau} (u+m_t) x}}
\\ && \nonumber \hskip2.2cm
+  4 t V(W) \exp{t \kappa_d \delta_t^d (e^s-1-sx)}\,.
\end{eqnarray*}
Note that the right-hand side is minimized up to a constant factor if both summands are of the same order, i.e., if
$$
- \frac {u^2 }{ 8 t \kappa_d \d_t^{d+\tau} (u+m_t)x} = \ln(tV(W))+t\kappa_d\delta_t^d (e^s-1-sx)\,.
$$
This is equivalent to say that
\begin{eqnarray*}
 x
 &=&
 \frac{\ln(t V(W))/(t\kappa_d\delta_t^d)+e^s-1}{2s}
 \\ &&  \nonumber
 +\sqrt{\frac{u^2}{8 \kappa_d^2 t^2 \delta_t^{2d+\tau} (u+m_t)s}+\bigg( \frac{\ln(tV(W))/(\kappa_dt\delta_t^d)+e^s-1}{2s} \bigg)^2}\,,
\end{eqnarray*}
since the negative solution of the quadratic equation above does not satisfy $x>0$, as required. Now, choosing $s>0$ in such a way that $x$ is minimized leads to \eqref{eq:LDIPoisson}. For $s=1$ we obtain
$$
x\leq 2\max\bigg\{ \frac{\ln(t V(W)) }{t\kappa_d\delta_t^d}+2,\sqrt{\frac{u^2}{8 \kappa_d^2 t^2 \delta_t^{2d+\tau} (u+m_t)}} \bigg\}\,,
$$
which yields \eqref{eq:LDIPoissons1}.
\end{proof}

\begin{proof}[Proof of Proposition~\ref{prop:ERS}]
For $x\in\R^d$ and a finite counting measure $\nu$ we define the score function
$$
\xi(x,\nu)=\frac{1}{2}\sum_{y\in\nu}\I(\|x-y\|\leq \tilde{\delta}) \|x-y\|^\tau
$$
for $\tilde{\delta}>0$ and the rescaled score function $\xi_t(x,\nu)$ as $\xi_t(x,\nu):=\xi(t^{1/d}x, t^{1/d}\nu)$ for $t\geq 1$ (to simplify comparison with \cite{EichelsbacherRaicSchreiber2013} we use the same notation as in that paper, which should not be confused with the notation used at the beginning of this subsection). Hence, we can rewrite $L_t^{(\tau)}$ as
$$
L_t^{(\tau)} = t^{-\frac \tau d}\sum_{x\in\eta_t} \xi_t(x,\eta_t)\,.
$$
Note that $\xi_t$ has $t^{-1/d}\tilde{\delta}$ as its so-called radius of stabilization (see \cite{EichelsbacherRaicSchreiber2013}) and that the number of points of $\eta_t$ which affect the value $\xi_t(x,\eta_t)$ is Poisson distributed for all $t\geq 1$. Consequently, for
$$
t^{\frac \tau d}L^{(\tau)}_t=\sum_{x\in\eta_t} \xi_t(x,\eta_t)
$$
the conditions of Theorem 1.3 in \cite{EichelsbacherRaicSchreiber2013} are satisfied with $\alpha=1$ and $\beta=0$ there. This implies that there are constants $C_1,C_2,C_3>0$, such that
$$
\P( t^{\frac{\tau}d} |L_t^{(\tau)} - \E L_t^{(\tau)}|\geq x)
\leq
\exp{-\,\min\bigg\{ C_1 \frac{x^2}{t^{\frac{2\tau}d} \V L_t^{(\tau)}},\,C_2 x^{\frac 13},\, C_3 x^{\frac 34}t^{- \frac 14}\bigg\}}
$$
for all $x\geq 0$. Choosing $x=t^{\tau/d} u$ and applying Theorem~\ref{thm:AsymptoticCovariance} concludes the proof.
\end{proof}

\subsection{The binomial point process and the convex distance}
The investigations in this section were partially motivated by a recent application of the Gilbert graph to the study of so-called empty triangles in \cite{BMR}. In this context it was important to work with a binomial point process instead of a Poisson point process. For this reason we also include a section on concentration inequalities in the case of an underlying binomial point process (this was left as an open problem in \cite{EichelsbacherRaicSchreiber2013}).

Assume w.l.o.g.~that the observation window $W$ has volume one and let $(\d_n)_{n\in\N}$ be a sequence of positive real numbers such that $\d_n\to 0$, as $n\to\infty$. Fix an integer $n\geq 2$, let $X_1,\ldots,X_n$ be independent and uniformly distributed random points in $W$ and consider the random set $\xi= \{ X_1, \dots , X_n\}$. The Gilbert graph with vertex set $\xi$ has an edge between two points if their distance is at most $\d_n$. Similar to \eqref{def:Lalphat}, we define for $\tau\in\R$ the functional
$$ L^{(\tau)}_n(\xi) :=  \frac 12 \sum_{(X_i,X_j)\in\xi_{\neq}^2} \I(\| X_i - X_j\| \leq \d_n) \| X_i - X_j\|^{\tau}\,,$$
where the sum ranges over all pairs $(X_i,X_j)$ of distinct points of $\xi$. For these random variables we can deduce the following concentration inequalities.

\begin{theorem}\label{thm:LDIBinomial}
Let $\tau\geq 0$, let $m_n$ be the median of $L_n^{(\tau)}(\xi)$ and define
\begin{eqnarray*}
x^*(u,n)
&=&
\inf_{s>0}\frac{\ln n  + n\kappa_d \delta_n^d(e^s-1)}{2n\kappa_d \delta_n^ds}
\\ && +
\sqrt{ \frac{ u^2}{8n^2\kappa_d^2\delta_n^{2d+\tau}(u+m_n)s }+\bigg( \frac{\ln n + n \kappa_d \delta_n^d(e^s-1)}{2n\kappa_d \delta_n^ds} \bigg)^2} .
\end{eqnarray*}
Then, for $u>0$,
\begin{equation}\label{eq:LDIBinomial}
\P(|L_n^{(\tau)}(\xi)-m_n|\geq u) \leq 8 \exp{-\frac{ u^2}{8n\kappa_d\delta_n^{d+\tau}x^*(u,n)(u+m_n)}}
\end{equation}
and, in particular,
\begin{equation*}\label{eq:LDIBinomials1}
\P(|L_n^{(\tau)}(\xi)-m_n|\geq u) \leq
8 \exp{- \min\bigg\{\frac{u^2}{C_{n,W} (u+m_n)}, \frac {u}{ D_n \sqrt{(u+m_n)}}  \bigg\}}
\end{equation*}
with
$C_{n,W}= 16 (\ln(n)\delta_t^\tau+2n \kappa_d \delta_n^{d+\tau}) $
and $D_n= 4\sqrt{2 \d_n^\tau }$.

\end{theorem}

The proof of Theorem~\ref{thm:LDIBinomial} is very similar to that of Theorem~\ref{thm:LDIPoisson}, and for this reason we only give a short sketch of it. We need the  local version of $L_n^{(\tau)}(\xi)$, namely
$$ L^{(\tau)}_n(X_i;\xi) :=  \sum_{X_j\in\xi\setminus\{X_i\}} \I(\| X_i - X_j\| \leq \d_n) \| X_i - X_j\|^{\tau} \,, $$
and thus $L^{(\tau)}_n(\xi)  = \frac 12 \sum_{i=1}^n L^{(\tau)}_n(X_i;\xi)$.

Assume that an additional point set $\zeta= \{ y_1, \dots , y_n\}$ with $y_1,\ldots,y_n\in W$ is given. Then
\begin{equation*}\label{eqn:inequalityL}
\begin{split}
L^{(\tau)}_n (\xi)- L^{(\tau)}_n (\zeta)
&\leq
 \sum_{i=1}^n  L^{(\tau)}_n(X_i;\xi) \I(X_i \notin \zeta)
\leq
 \sum_{i=1}^n L^{(\tau)}_n(X_i;\xi) \I(X_i \neq y_i)\,.
\end{split}
\end{equation*}

To prove the deviation inequality, we use Talagrand's original convex distance.
For $\xi =\{X_1, \dots, X_n\}$ with $X_1,\hdots,X_n\in W$ and $A\subset W^n$ it is defined as
$$
d_T (\xi,A) := \max_{ u \in {\Sd}^{n-1}} \min_{ \zeta \in A}\, \sum_{i=1}^n u_i  \I(X_i \neq y_i)
$$
where, as usual, $u_i$, $i\in\{1,\hdots,n\}$, are the coordinates of $u \in {\Sd}^{n-1}$, cf.\ \cite[Definition 11.1]{DP}. In the following let $L_n^{(\tau)}(\xi)\neq 0$. We take the normalized vector of $(L^{(\tau)}_n(X_1;\xi), \dots ,  L^{(\tau)}_n(X_n;\xi))$ as $u$ and obtain 
\begin{eqnarray*}
d_T (\xi,A)&\geq& \nonumber
\min_{ \zeta \in A} \frac{1}{\sqrt{\sum_{i=1}^n L^{(\tau)}_n(X_i;\xi)^2}}
\Big(L^{(\tau)}_n (\xi) - L^{(\tau)}_n (\zeta) \Big) .
\end{eqnarray*}
If we assume that $L^{(\tau)}_n(X_i;\xi)$ is bounded by some $B>0$ for all $i\in\{1,\ldots,n\}$, we have
$ \sum_{i=1}^n L^{(\tau)}_n(X_i;\xi)^2  \leq 2 B L^{(\tau)}_n (\xi)$. Hence, it follows from the previous inequality that
$$
d_T (\xi,A) \geq \frac{1}{\sqrt {2 B } }\,
\min_{ \zeta \in A}  \frac{L^{(\tau)}_n (\xi) - L^{(\tau)}_n (\zeta)}{ \sqrt{L^{(\tau)}_n (\xi)}}
$$
if $L_n^{(\tau)}(\xi)\neq0$, $L_n^{(\tau)}(X_i,\xi)\leq B$ for all $i\in\{1,\hdots,n\}$ and $L_n^{(\tau)}(\xi)\geq L_n^{(\tau)}(\zeta)$ for all $\zeta\in A$. 
It was proved by Talagrand in \cite{Talagrand} that $d_T$ satisfies a large deviation inequality (see also Theorem 11.1 in \cite{DP}). Namely, for $A \subset W^n$ we have
\begin{equation*}
\P (A) \P  \left( d_T(\xi, A) \geq s  \right) \leq  \exp{- \frac {s^2} 4}  .
\end{equation*}

Precisely as in the Poisson case it follows that
$$
\P( L^{(\tau)}_n (\xi) \geq u+ m_n)
 \leq
2 \exp{\!- \frac{u^2} {8B (u+m_n) } }
+  \P( \exists i:\ L^{(\tau)}_n (X_i;\xi) > B)
$$
for $u>0$ and
$$
\P(L^{(\tau)}_n(\xi) \leq m_n-u)
\leq
2 \exp{- \frac {u^2}{8B m_n}}
+ 2  \P( \exists i:\ L^{(\tau)}_n (X_i;\xi) > B)
$$
for $m_n\neq 0$ and $u>0$.

Conditioned on $X_i$, $L^{(0)}_n(X_i;\xi)$ is a binomial random variable with distribution ${\rm Bin} (n-1,p(X_i))$ and
$$ p(X_i):= \P(\| X_i-X_1\| \leq \d_n \, | X_i) = V(B^d(X_i, \d_n)\cap W) \leq \kappa_d \d_n^d .
$$
For a binomial random variable $X\sim {\rm Bin}(m,p)$, the exponential Markov inequality implies the Chernoff bound
$$
\P(X\geq u) \leq \inf_{s\geq 0} \E \exp{s X- su}
\leq \inf_{s\geq 0} \exp{mp(e^{s}-1)- su}
$$
for $u>0$.
For $\tau\geq 0$ this yields
\begin{equation*}
\P( \exists i:\ L^{(\tau)}_n (X_i;\xi) > B) \leq n \inf_{s\geq 0} \exp{E(e^{s}-1)- s\delta_n^{-\tau} B}
\end{equation*}
with $E:=n\kappa_d\delta_n^d$. Combining these inequalities implies the following concentration inequality for $L_n^{(\tau)}(\xi)$:
\begin{proposition}\label{prop:BoundBinomial}
For $\tau\geq 0$ and $u>0$ we have
\begin{equation}\label{eq:BoundBinomial}
\begin{split}
\P( | L^{(\tau)}_n (\xi) -m_n |\geq u)
&\leq 
\inf_{s\geq 0, x>0}
\Big( 4 \exp{- \frac { u^2 }{ 8 n \kappa_d \d_n^{d+\tau} (u+m_n)x}}\\
& \hskip1.5cm
+  3 n \exp{n \kappa_d \delta_n^d(e^s-1- sx) }\Big)
\end{split}
\end{equation}
where $ m_n$ is the median of $L^{(\tau)}_n(\xi)$.
\end{proposition}

\begin{proof}[Proof of Theorem~\ref{thm:LDIBinomial}]
For fixed $s>0$, the right-hand side of \eqref{eq:BoundBinomial} becomes minimal -- up to a constant factor -- if both summands are equal. This implies
\begin{eqnarray*}
x&=&
\frac{\ln n/(n\kappa_d\delta_n^d)+e^s-1}{2s}
\\ &&
+\sqrt{\frac{ u^2}{8 n^2 \kappa_d^2  \delta_n^{2d+\tau}(u+m_n)s}+\bigg( \frac{\ln n/(n\kappa_d\delta_n^d)+e^s-1}{2s} \bigg)^2} .
\end{eqnarray*}
Choosing $s$ in such a way that $x$ becomes minimal leads to \eqref{eq:LDIBinomial}. For $s=1$ we obtain
$$
x \leq 2 \max\bigg\{  \frac{\ln n }{n\kappa_d\delta_n^d}+2 ,\, \sqrt{\frac{ u^2}{8 n^2 \kappa_d^2  \delta_n^{2d+\tau}(u+m_n)}}\bigg\}\,,
$$
which completes the proof.
\end{proof}

As an example how to combine both error terms in Proposition \ref{prop:BoundBinomial} we make our considerations more explicit in the sparse regime when $n^2\delta_n^d=c$, $c>0$. This has been of particular interest in \cite{BMR}.
From the previous computations leading to Proposition~\ref{prop:BoundBinomial} we can deduce the following corollary.

\begin{corollary}
Assume  $\tilde{B}>0$, $c >0$ and choose the sequence $(\d_n)_{n\in\N}$ such that $n^2 \d_n^d= c$ for all $n\in\N$. Then there is a constant $C=C(\tilde{B}, c,  \tau)>0$ such that
$$
 \P( L^{(\tau)}_n (\xi) \geq 9 \tilde{B}^2 \d_n^\tau \ln n) \leq C n^{-\tilde{B}+1} \,.
$$
\end{corollary}

\begin{proof}
Choosing $B=\tilde{B}\delta_n^{\tau}$ and $u=9\tilde{B}^2\delta_n^\tau \ln n - m_n$  we see that
\begin{eqnarray*}\label{eq:BoundCorrollaryBinomial}
\P(L_n^{(\tau)}(\xi)\geq u+m_n)
&\leq &
2 \exp{-\frac{(9\tilde{B}^2\delta_n^\tau \ln n - m_n)^2}{8\tilde{B} \delta_n^\tau 9\tilde{B}^2\delta_n^\tau \ln n}}
\\ && \hskip1cm
+ n \inf_{s\geq 0} \exp{E(e^s-1)-s\tilde{B}}\,.
\end{eqnarray*}
Since $m_n\leq 2 \E L_n^{(\tau)}(\xi)\leq \frac{d\kappa_d}{\tau+d} n^2 \delta_n^{d+\tau}= \frac{d\kappa_d}{\tau+d} c\,  \delta_n^\tau$, we have
$$
9\tilde{B}^2\delta_n^\tau \ln n - m_n \geq \sqrt{72} \tilde{B}^2\delta_n^\tau \ln n
$$
for sufficiently large $n$. Setting $s=\ln n$ we find
$$
n \inf_{s\geq 0} \exp{E(e^s-1)-s\tilde{B}} \leq \exp{E(n-1)} n^{-\tilde{B}+1}\,.
$$
Now, $n^2 \delta_n^d=c$ and $E=n\kappa_d\delta_n^d$ imply that $\exp{E(n-1)}$ is uniformly bounded in $n$. Combining the two previous inequalities concludes the proof.
\end{proof}

\bigskip

\end{document}